\documentclass[12pt,reqno]{amsart}

\usepackage[T1]{fontenc}
\usepackage[utf8]{inputenc}
\usepackage[english]{babel}
\usepackage{amsmath,amssymb,amsthm,mathtools}
\usepackage{dsfont}
\usepackage{microtype}
\usepackage{indentfirst}
\usepackage[left=2.7cm,right=2.7cm,top=2.8cm,bottom=2.8cm]{geometry}

\usepackage[
  colorlinks=true,
  linkcolor=blue,
  citecolor=blue,
  urlcolor=blue
]{hyperref}

\numberwithin{equation}{section}

\usepackage[mathlines,pagewise]{lineno}

\newtheorem{theorem}{Theorem}[section]
\newtheorem{proposition}[theorem]{Proposition}
\newtheorem{lemma}[theorem]{Lemma}

\theoremstyle{definition}
\newtheorem{definition}{Definition}[section]

\theoremstyle{remark}
\newtheorem{remark}[theorem]{Remark}

\newcommand{\dd}{\,\mathrm{d}}
\newcommand{\Tr}{\operatorname{Tr}}
\newcommand{\tr}{\operatorname{tr}}
\newcommand{\diver}{\operatorname{div}}
\newcommand{\R}{\mathds R}
\newcommand{\HH}{\mathcal H}
\newcommand{\BV}{\mathrm BV}
\newcommand{\dist}{\operatorname{dist}}
\newcommand{\supp}{\operatorname{supp}}

\title[Weighted Sobolev Trace Inequalities]
{Sharp Weighted Sobolev Trace Inequalities on Convex Cones
and Rigidity of Extremals}

\author[J. M. do \'{O}]{Jo\~{a}o Marcos do \'{O}}
\address{Universidade Federal da Para\'iba, Departamento de Matem\'atica
\newline\indent
58051-900 Jo\~{a}o Pessoa-PB, Brazil}
\email{\href{mailto:jmbo@mat.ufpb.br}{jmbo@mat.ufpb.br}}

\author[L. M. P. Silva]{Lucas M. P. Silva}
\address{Universidade Federal da Para\'iba, Departamento de Matem\'atica
\newline\indent
58051-900 Jo\~{a}o Pessoa-PB, Brazil}
\email{\href{mailto:lmpes@mat.ufpb.br}{lmpes@mat.ufpb.br}}

\subjclass[2020]{Primary 46E35; Secondary 26D10, 49Q22, 35J92}

\keywords{Weighted Sobolev trace inequality, convex cones,
mass transportation, sharp constants, extremals, rigidity}

\begin{document}


\begin{abstract}
We study critical weighted Sobolev trace inequalities on convex cones
endowed with positively homogeneous weights. 
Using mass transportation, we first establish the continuity of the critical trace embedding under
general homogeneity and concavity assumptions on the weight. 
We then identify a compatibility condition between the weight, the cone, and a
distinguished direction that makes the transport argument sharp. 
In this regime, we determine the optimal constant and prove the existence of
extremals. 
Finally, we obtain a complete classification of all nontrivial extremals modulo translation in the lineality space.
\end{abstract}

\maketitle

\section{Introduction}

Critical Sobolev trace inequalities quantify how the interior energy of a
function controls the size of its boundary trace. In the presence of a
positively homogeneous weight, the Euclidean dimension is naturally
replaced by an effective dimension determined by the degree of homogeneity
of the weight. Convex cones provide a natural geometric setting in which
this homogeneity is compatible with both the domain and the weighted
energy.

The purpose of this paper is to study the corresponding critical weighted
trace problem by means of mass transportation. A basic feature of the
problem is that continuity and sharpness are governed by different levels
of compatibility between the geometry of the cone and the weight. Under
general homogeneous--concave assumptions, the transport method gives a
continuous critical trace embedding with an explicit constant. However, the same
argument is not automatically sharp. A weighted-divergence term
appearing in the boundary integration by parts may produce a genuine loss.

This leads naturally to two different geometric regimes.
We first establish the continuous trace inequality under a
transversality condition. We then introduce a distinct
transport-compatible regime in which the weighted-divergence
loss disappears and the transport argument becomes sharp.
In this regime, we determine the optimal constant, construct
extremals, and obtain a complete classification of the equality
cases, including the translation freedom generated by the
lineality space of the cone.

\subsection{Related results}

Sharp Sobolev trace inequalities have been studied from several different
points of view. In the Euclidean half-space, Lions proved the existence of
extremals for the critical trace quotient by means of the
concentration--compactness principle \cite{Lions1985}. In the quadratic
case, Escobar identified the sharp constant and classified the extremals
using conformal invariance \cite{Escobar1988}, while Beckner obtained
related sharp inequalities using harmonic analysis on the sphere
\cite{Beckner1993}. For general exponents, Nazaret~\cite{Nazaret2006} used mass transportation
to determine the sharp constant in the Sobolev trace inequality on the
half-space and to exhibit a family of extremals, although without a
complete rigidity result. This work showed that optimal transport
provides a natural framework for sharp trace inequalities beyond the
conformally invariant setting. More generally, optimal transport has become an effective tool in the
study of sharp functional inequalities. Cordero-Erausquin, Nazaret, and
Villani~\cite{CorderoNazaretVillani2004} developed a mass-transportation
approach to sharp Sobolev and Gagliardo--Nirenberg inequalities, including
the analysis of equality cases. In the trace setting, Maggi and Neumayer~\cite{MaggiNeumayer2017}
introduced a family of sharp trace--Sobolev inequalities on half-spaces
connecting the classical Sobolev and Escobar inequalities. In particular,
they obtained a complete classification of the corresponding equality
cases, providing the relevant rigidity result in the unweighted
half-space setting.

Related trace
inequalities on convex cones were obtained by Zugmeyer
\cite{Zugmeyer2019}, who proved sharp
Gagliardo--Nirenberg--Sobolev trace inequalities on cones and weighted
trace inequalities on epigraphs of convex functions. More precisely, the cone trace inequality of
Zugmeyer~\cite[Corollary~1.3]{Zugmeyer2019}
uses the projected boundary measure $\mathrm{d}x'$.
For an epigraph cone and $e=-e_N$, one has
$\mathrm{d}x'=(e\cdot\eta)\,\mathrm{d}\sigma$,
where $\eta$ is the outward unit normal.
The present paper instead uses the same homogeneous
density $w$ in the interior measure $w\,\mathrm{d}x$
and the boundary measure $w\,\mathrm{d}\sigma$.

A parallel line of research concerns weighted Sobolev and isoperimetric
inequalities on cones. Cabr\'e and Ros-Oton
\cite{CabreRosOton2013} proved sharp Sobolev and isoperimetric inequalities
for monomial weights, including the best constants and the extremal
functions. This theory was extended by Cabr\'e, Ros-Oton, and Serra
\cite{CabreRosOtonSerra2016} to more general homogeneous weights on convex
cones under a natural concavity assumption. Their results highlight the
role of the effective dimension associated with the homogeneity of the
weight. Using mass transportation, Nguyen \cite{Nguyen2015} established
sharp weighted Sobolev and Gagliardo--Nirenberg inequalities on
half-spaces with monomial weights and described all extremals. 

More general weighted inequalities involving two homogeneous
weights were studied by Balogh, Guti\'errez, and
Krist\'aly~\cite{BaloghGutierrezKristaly2021}.
Balogh, Don, and Krist\'aly~\cite{BDK2024} established sharp
weighted log-Sobolev inequalities on convex cones and
characterized their equality cases. They subsequently
developed weighted Gagliardo--Nirenberg inequalities involving
homogeneous weights, together with equality results under
suitable structural assumptions~\cite{BDK2025}.
These contributions concern interior inequalities.
The present paper addresses a boundary trace problem in which
the same homogeneous density governs both the interior energy
and the boundary measure. In the anisotropic setting,
Ciraolo, Figalli, and Roncoroni \cite{CiraoloFigalliRoncoroni2020}
developed related rigidity results for critical equations in convex cones.

Weighted trace inequalities have also appeared in connection with
fractional and extension problems. In particular, Case \cite{Case2020}
proved sharp weighted Sobolev trace inequalities on the upper half-space
for power weights arising in extension problems for fractional powers of
the Laplacian. These results concern a distinguished class of degenerate
weights and the flat geometry of the half-space.

The aim of the present paper is to develop a weighted trace theory on
convex cones that separates the general continuity problem from the
additional geometric structure required for sharpness and rigidity. We
consider positively homogeneous weights satisfying a concavity condition,
with the same weight governing both the interior energy and the boundary
trace measure.
\subsection{Main results}\label{sec:main-results}
We begin with the setting of the continuous trace inequality.

Let $N\geq3$, and denote by $\mathds{S}^{N-1}
=
\{x\in\mathds{R}^N:|x|=1\}$ the unit sphere in $\mathds{R}^N$. Let
$D\subset\mathds{S}^{N-1}$ be a Lipschitz domain and define
\[
\Sigma
:=
\{x=rq\in\mathds{R}^N:r>0,\ q\in D\}.
\]
Assume that $\Sigma$ is an open convex cone with Lipschitz boundary and
that, after a rotation,
\[
D
\subset
\{q=(q',q_N)\in\mathds{S}^{N-1}:q_N>0\}.
\]
Equivalently,
\[
\Sigma
\subset
\{x=(x',x_N)\in\mathds{R}^N:x_N>0\}.
\]

Let
\[
-e=(0,\ldots,0,1)\in D,
\qquad
e=(0,\ldots,0,-1).
\]
We denote by $\eta$ the exterior unit normal to $\partial\Sigma$, defined
for $\mathcal H^{N-1}$-almost every boundary point, and write
\[
\mathrm{d}\sigma
:=
\mathrm{d}\mathcal H^{N-1}\big|_{\partial\Sigma}.
\]

Assume that
\[
c_\Sigma
:=
\operatorname*{ess\,inf}_{x\in\partial\Sigma\setminus\{0\}}
e\cdot\eta(x)
>0.
\]
This is a uniform transversality condition between the boundary of
$\Sigma$ and the distinguished direction $e$.

Let $w\in C(\overline\Sigma)$ satisfy
\[
w\geq0
\quad\text{on }\overline\Sigma,
\qquad
w>0
\quad\text{in }\Sigma,
\]
and assume that $w$ is positively homogeneous of degree $a>0$,
\[
w(tx)=t^a w(x),
\qquad
x\in\Sigma,\quad t>0,
\]
and that
\[
w^{1/a}
\quad\text{is concave in }\Sigma.
\]

Set
\[
d:=N+a,
\qquad
1<p<d,
\qquad
 p^*:=\frac{pd}{d-p},
 \qquad
p_*:=\frac{p(d-1)}{d-p},
\qquad
p':=\frac{p}{p-1}.
\]

We denote by $C_c^\infty(\overline\Sigma)$ the restrictions to
$\overline\Sigma$ of functions in $C_c^\infty(\mathds{R}^N)$. The
homogeneous weighted Sobolev space $\dot W^{1,p}(\Sigma,w)$ is defined as
the completion of $C_c^\infty(\overline\Sigma)$ with respect to the
homogeneous norm
\[
\|u\|_{\dot W^{1,p}(\Sigma,w)}
:=
\left(
\int_\Sigma
|\nabla u|^p w\,\mathrm{d}x
\right)^{1/p}.
\]

Our first result gives the continuity of the critical trace operator.

\begin{theorem}[Continuous weighted Sobolev trace embedding]
\label{thm:trace-continuous}
Under the assumptions above, the trace operator, initially defined on
$C_c^\infty(\overline\Sigma)$, extends uniquely to a continuous linear map
\[
\operatorname{Tr}:
\dot W^{1,p}(\Sigma,w)
\longrightarrow
L^{p_*}(\partial\Sigma,w\,\mathrm{d}\sigma).
\]
More precisely,
\begin{equation}
\label{eq:trace-continuous}
\left(
\int_{\partial\Sigma}
|\operatorname{Tr}u|^{p_*}w\,\mathrm{d}\sigma
\right)^{1/p_*}
\leq
C_{\mathrm{MT}}
\left(
\int_\Sigma
|\nabla u|^p w\,\mathrm{d}x
\right)^{1/p}
\end{equation}
for every $u\in\dot W^{1,p}(\Sigma,w)$, where
\begin{equation}
\label{eq:CMT}
C_{\mathrm{MT}}
:=
\left[
c_\Sigma
\left(
\frac{d-p}{p-1}
\right)^{p_*-1}
\left(
\int_\Sigma
|x-e|^{-(d-1)p'}w(x)\,\mathrm{d}x
\right)^{\frac{p-1}{d-p}}
\right]^{-1/p_*}.
\end{equation}
Moreover,
\[
0<
\int_\Sigma
|x-e|^{-(d-1)p'}w(x)\,\mathrm{d}x
<\infty.
\]
\end{theorem}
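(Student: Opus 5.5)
By density and homogeneity, it suffices to prove \eqref{eq:trace-continuous} for nonnegative $f\in C_c^\infty(\overline\Sigma)$ normalized by
\[
\int_{\partial\Sigma} f^{p_*}w\,\mathrm{d}\sigma=1 .
\]
The plan follows Nazaret's transport proof of the sharp trace inequality on the half-space. We work with the weighted measure $w\,\mathrm{d}x$ on $\Sigma$ and pick the model profile
\[
h(x):=|x-e|^{-(d-p)/(p-1)}, \qquad x\in\overline\Sigma .
\]
Since $\Sigma\subset\{x_N>0\}$ and $e=-e_N$, we have $|x-e|\ge 1$ on $\overline\Sigma$.

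We use $h^{p^*}$ to define the target probability density
\[
G:=\frac{h^{p^*}}{\int_\Sigma h^{p^*}w},
\]
noting that $h^{p^*}=|x-e|^{-(d-1)p'}\cdot|x-e|^{\dots}$ up to the exponent bookkeeping. Indeed, the displayed integral $\int|x-e|^{-(d-1)p'}w$ is exactly the one needed. The source density is $F:=f^{p_*\cdot d/(d-1)}$, or more precisely a power chosen so that the trace appears. Following Nazaret, we take $F=f^{p^*}/\int f^{p^*}w$ and let $T=\nabla\varphi$ be the Brenier map pushing $F\,w\,\mathrm{d}x$ to $G\,w\,\mathrm{d}x$. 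Since $\Sigma$ is convex, $T(\Sigma)\subset\overline\Sigma$.

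We combine the Monge--Amp\`ere equation with the arithmetic--geometric inequality and the concavity of $w^{1/a}$. This gives the weighted displacement estimate of Cabr\'e--Ros-Oton--Serra / Cordero-Erausquin--Nazaret--Villani type:
\[
\int_\Sigma G^{-1/d}\,G\,w \;\le\; \frac1d\int_\Sigma F^{1-1/d}\Big(\Delta_A\varphi+\frac{\nabla w\cdot\nabla\varphi}{w}\Big)w .
\]
Here the key weighted ingredient is the pointwise inequality
\[
\frac{\nabla w(x)\cdot\nabla\varphi(x)}{w(x)}\ge a\Big(\frac{w(\nabla\varphi)}{w(x)}\Big)^{1/a}-a ,
\]
which is a consequence of homogeneity and concavity of $w^{1/a}$.

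Next we integrate by parts the right-hand side. The computation is simplest after replacing $\varphi$ by $\varphi(x)-e\cdot x$, i.e.\ working with $\nabla\varphi-e$, so that boundary terms are
\[
\int_{\partial\Sigma}F^{1-1/d}\,(\nabla\varphi-e)\cdot\eta\,w\,\mathrm{d}\sigma .
\]
Since $\nabla\varphi\in\overline\Sigma$ and $\eta$ is an outer normal to a convex cone containing its vertex, $\nabla\varphi\cdot\eta\le 0$. Hence this boundary term is at most
\[
-\int_{\partial\Sigma}F^{1-1/d}\,e\cdot\eta\,w\,\mathrm{d}\sigma\le -c_\Sigma\int_{\partial\Sigma}F^{1-1/d}w\,\mathrm{d}\sigma .
\]
This is where the transversality constant $c_\Sigma$ enters. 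We do not try to treat the interior term $e\cdot\nabla w$ exactly. Instead we use that $-e\in\overline\Sigma$ and homogeneity/concavity give $e\cdot\nabla w\le0$ (a loss, not sharp), which is why $C_{\mathrm{MT}}$ need not be optimal.

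The interior gradient term is handled by Hölder/Young, as in Nazaret:
\[
-\int F^{1-1/d}\nabla\cdot\ \to\ \int |\nabla f|\,f^{\dots}|\nabla\varphi-e|w \le \|\nabla f\|_{L^p(w)}\Big(\int F|\nabla\varphi-e|^{p'}w\Big)^{1/p'} .
\]
Then the transport identity converts $\int F|\nabla\varphi-e|^{p'}w$ into $\int G|y-e|^{p'}w$, which is computable in terms of $h$. Collecting terms yields an inequality of the form
\[
c_\Sigma\int_{\partial\Sigma}f^{p_*}w + \text{(const)}\le \text{(const)}\|\nabla f\|_{L^p(w)}\cdot(\dots) .
\]
We then optimize over dilations $f_\lambda(x)=\lambda^{(d-p)/p}f(\lambda x)$, which are admissible because $\Sigma$ is a cone and $w$ is homogeneous. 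This produces \eqref{eq:trace-continuous} with the stated $C_{\mathrm{MT}}$; the bookkeeping of exponents with $p_*-1$ and $(p-1)/(d-p)$ is routine.

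Extension of $\operatorname{Tr}$ follows by density and completeness of $L^{p_*}$.

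For finiteness of the integral, we use the bound
\[
\int_\Sigma|x-e|^{-(d-1)p'}w\le C\int_1^\infty r^{-(d-1)p'}r^{a}r^{N-1}\,dr ,
\]
valid since $w\le C|x|^a$ by continuity on the compact $\overline\Sigma\cap\mathds S^{N-1}$ and $|x-e|\ge\max(1,|x|)$ on $\Sigma$. This converges because $(d-1)p'>d$, i.e.\ $p<d$. Positivity holds since $w>0$ on the open set $\Sigma$.

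The main obstacle is the rigorous justification of the integration by parts with a merely convex Brenier potential. Only the distributional Hessian is available, and its singular part must be dropped with the correct sign. Boundary behavior near the vertex and on the Lipschitz boundary also has to be controlled. I would handle this as in Cordero-Erausquin--Nazaret--Villani and Nazaret, using the Aleksandrov Hessian $\Delta_A\varphi\le\Delta_{\mathcal D'}\varphi$. Arguing first for compactly supported smooth $f$ then makes the boundary trace of $\nabla\varphi$ bounded, since $\nabla\varphi\in\overline\Sigma$ and $|\nabla\varphi|$ is locally bounded on $\operatorname{supp}f$.
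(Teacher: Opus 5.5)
Your outline follows the paper's argument: transport $f^{p^*}w\,\mathrm{d}x$ onto the normalized profile $|x-e|^{-\kappa p^*}w\,\mathrm{d}x$, then Monge--Amp\`ere with weighted AM--GM, integration by parts against $w(\nabla\varphi-e)$, the sign $\nabla\varphi\cdot\eta\le0$, transversality, discarding $\int F^{1-1/d}e\cdot\nabla w\le0$, and finally H\"older with push-forward. However, the step that produces $C_{\mathrm{MT}}$ does not work as you state it. The dilations $f_\lambda(x)=\lambda^{(d-p)/p}f(\lambda x)$ leave $\|\nabla f\|_{L^p(\Sigma,w)}$, $\|f\|_{L^{p_*}(\partial\Sigma,w)}$ and $\|f\|_{L^{p^*}(\Sigma,w)}$ all unchanged; the scaling exponents are $d-N-a$, $d-1-(N-1)-a$ and $d-N-a$, all zero. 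So there is nothing to optimize. Your ``(const)'' hides the free quantity $s:=\|f\|_{L^{p^*}(\Sigma,w)}$. With $\int_{\partial\Sigma}f^{p_*}w\,\mathrm{d}\sigma=1$, target $c_0h$, $c_0=\bigl(\int_\Sigma h^{p^*}w\,\mathrm{d}x\bigr)^{-1/p^*}$ and $I$ the integral in \eqref{eq:CMT}, the chain gives
\[
c_\Sigma\, s^{1-p_*}+d\,c_0^{p_*}I\,s\le p_*\,c_0^{p_*-1}I^{1/p'}\,\|\nabla f\|_{L^p(\Sigma,w)} .
\]
$C_{\mathrm{MT}}$ comes from minimizing the left-hand side over $s>0$. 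This is the paper's minimization of $\Phi(t)=t^{p_*}/(At-B)$ over $t=\mathcal K(u)$. No symmetry is involved; the non-homogeneity comes from the shift by $e$ in the target.

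Relatedly, the weight inequality you state, with the extra $-a$, is too weak for your displayed displacement estimate: it leaves an additional $a\int_\Sigma F^{1-1/d}w\,\mathrm{d}x$ on the right. You need $\nabla w(x)\cdot z\ge a\,w(x)^{1-1/a}w(z)^{1/a}$ for $z\in\overline\Sigma$, which uses Euler's identity $\nabla h(x)\cdot x=h(x)$ for $h=w^{1/a}$.

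Your resolution of the regularity issue is also incorrect: compact support of $f$ does not make $\nabla\varphi$ bounded. Since $(\nabla\varphi)_\#\mu=\nu$ and $\operatorname{supp}\nu=\overline\Sigma$ is unbounded, $\nabla\varphi$ is $\mu$-essentially unbounded on the compact set $\operatorname{supp}f$. Nothing prevents it from blowing up at points of $\partial\Sigma\cap\operatorname{supp}f$, exactly where the trace is measured. So $w\nabla\varphi$ need not lie in $\mathrm{BV}\cap L^\infty$ up to $\partial\Sigma$, and the Gauss--Green step with $\operatorname{Tr}\nabla\varphi\cdot\eta\le0$ is unjustified for this target.

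The paper avoids this in four steps:
\begin{itemize}
\item It truncates the target to $v_R=c_R\chi_Rh$.
\item It replaces the potential by $(\varphi_0^*+I_{\overline\Sigma\cap\overline B_{2R}})^*$, which is globally Lipschitz with gradient in $\overline\Sigma\cap\overline B_{2R}$ everywhere (Lemma~\ref{lem:global-lipschitz-regularity}). Then $w\nabla\varphi\in\mathrm{BV}(\Sigma_R)$ by the product rule with $w\in W^{1,1}(\Sigma_R)$ (Lemma~\ref{lem:weight}).
\item It proves the estimate for this compactly supported target.
\item It lets $R\to\infty$. Only the target integrals $\int v_R^{p_*}w$ and $\int v_R^{p^*}|y-e|^{p'}w$ change, and they converge by dominated convergence.
\end{itemize}
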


The estimate in Theorem~\ref{thm:trace-continuous} is not automatically
sharp. The reason is that the weighted Gauss--Green formula produces an
additional term involving the derivative of the weight in the
distinguished direction. Under the assumptions above, this term has a
strict sign, and the boundary step of the mass-transportation argument
generally loses information. This leads us to introduce a second regime
in which the weight and the boundary geometry are compatible with
equality.

We retain the assumptions that $N\geq3$, that $\Sigma$ is an open convex
cone with Lipschitz boundary, and that $w$ is continuous and nonnegative
on $\overline\Sigma$, positive in $\Sigma$, homogeneous of
degree $a>0$, with $w^{1/a}$ concave. We also retain $1<p<N+a$ and the
definitions of the energy space and critical exponents. We use the notation $c_{\Sigma,e}$ for the boundary constant in the sharp
regime in order to distinguish it from the constant $c_\Sigma$ appearing
in Theorem~\ref{thm:trace-continuous}.
The preceding
geometric normalisation and transversality assumptions are replaced by
the following compatibility conditions.

\begin{definition}[Transport-compatible triple]
\label{def:transport-compatible}
We say that $(\Sigma,w,e)$ is \emph{transport-compatible} if
\begin{align}\label{eq:first-comp-cond}
    \operatorname{dist}(e,\overline\Sigma)>0, \qquad e \cdot \nabla w=0 \text{ a.e. in }\Sigma,
\end{align}
the second condition is equivalent to $\partial_{x_N}w=0$
 for the present choice $e=-e_N$. In addition, the weighted boundary measure $w\,\mathrm{d}\sigma$ is nonzero on
$\partial\Sigma$, and there exists a constant $c_{\Sigma,e}>0$ such that

\begin{equation}\label{eq:second-comp-cond}
  e\cdot\eta=c_{\Sigma,e}
\qquad
w\,\mathrm{d}\sigma\text{-almost everywhere on }\partial\Sigma.  
\end{equation}
\end{definition}

The compatibility condition
\[
e \cdot \nabla w=0 \text{ a.e. in }\Sigma
\]
removes the weighted-divergence term
that is responsible for the loss in the general transport argument. The
compatibility condition~\eqref{eq:second-comp-cond}  is the corresponding equality condition for the boundary
geometry. We emphasize, however, that the cone boundary contribution $\Tr T \cdot \eta$, where $T$ is the transport map, is nonpositive and does not vanish merely by compatibility. For the extremizing source/target choice, the transport $T$ is the identity and the remaining cone boundary contribution also vanishes because $x \cdot \eta =0$.  Hence, equality holds in the boundary estimate for this choice.

For a transport-compatible triple $(\Sigma,w,e)$, set
\[
I_{\Sigma,w,e}
:=
\int_\Sigma
|x-e|^{-(d-1)p'}w(x)\,\mathrm{d}x.
\]

Our second result identifies the optimal constant and gives an explicit
family of extremals.

\begin{theorem}[Sharp weighted Sobolev trace inequality and attainment]
\label{thm:sharp-trace}
Assume that $(\Sigma,w,e)$ is transport-compatible. Then
\[
0<I_{\Sigma,w,e}<\infty.
\]
Define
\[
C_{\sharp}
:=
\left[
c_{\Sigma,e}
\left(
\frac{d-p}{p-1}
\right)^{p_*-1}
I_{\Sigma,w,e}^{\frac{p-1}{d-p}}
\right]^{-1/p_*}.
\]
Then
\begin{equation}
\label{eq:sharp-trace}
\left(
\int_{\partial\Sigma}
|\operatorname{Tr}u|^{p_*}w\,\mathrm{d}\sigma
\right)^{1/p_*}
\leq
C_{\sharp}
\left(
\int_\Sigma
|\nabla u|^p w\,\mathrm{d}x
\right)^{1/p}
\end{equation}
for every $u\in\dot W^{1,p}(\Sigma,w)$.

The constant $C_{\sharp}$ is optimal and is attained. More precisely,
equality in \eqref{eq:sharp-trace} is attained by every function of the
form
\begin{equation}
\label{eq:explicit-extremals}
u_{C,\lambda}(x)
=
C\lambda^{\frac{d-p}{p}}
|\lambda x-e|^{-\frac{d-p}{p-1}},
\qquad
C\neq0,
\quad
\lambda>0.
\end{equation}
\end{theorem}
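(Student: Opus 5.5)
The plan is to rerun the mass-transportation argument behind Theorem~\ref{thm:trace-continuous} in the transport-compatible setting. The two points at which that argument can lose information are then shown to be exact for the profile $h(x)=|x-e|^{-\frac{d-p}{p-1}}$.

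\emph{Step 1: the integral $I_{\Sigma,w,e}$.} Since $\dist(e,\overline\Sigma)=:\delta>0$, the integrand is bounded by $\delta^{-(d-1)p'}\sup_{\overline\Sigma\cap B_R}w$ near the origin. At infinity it behaves like $|x|^{a-(d-1)p'}$, and $\int^\infty r^{N-1+a-(d-1)p'}\,\mathrm{d}r$ converges because $(d-1)p'>d$; this last inequality holds since $p<d$ gives $(d-1)p>d(p-1)$. Positivity follows from $w>0$ on the open set $\Sigma$. Note that no transversality with $e$ is needed, only the positive distance.

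\emph{Step 2: the inequality.} By density it suffices to take a nonnegative $u\in C_c^\infty(\overline\Sigma)$ normalised so that $\int_{\partial\Sigma}u^{p_*}w\,\mathrm{d}\sigma$ matches the trace mass of $h$. We let $T=\nabla\varphi$ be the Brenier map pushing $u^{p^*}w\,\mathrm{d}x$ forward to $h^{p^*}w\,\mathrm{d}x$ (suitably normalised; alternatively one uses the source/target pair of the continuous theorem). The Monge--Amp\`ere equation, the weighted AM--GM inequality from concavity of $w^{1/a}$ together with homogeneity (the Cabr\'e--Ros-Oton--Serra/Nguyen argument), and the relation $\int h^{p^*-1/d}\circ T\ldots$ give
\[
\int_\Sigma h^{p^*(1-1/d)}w\le \tfrac1d\int_\Sigma u^{p^*(1-1/d)}\,\mathrm{div}_w(T)\,\mathrm{d}x .
\]
We then integrate by parts the term $\int u^{p_*}\ldots(T-e)$, choosing $T-e$ rather than $T$. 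This produces the interior term $-\int\nabla(u^{\cdot})\cdot (T-e)w$, a term with $(e\cdot\nabla w)$, and the boundary term $\int_{\partial\Sigma}u^{\cdot}(T-e)\cdot\eta\,w\,\mathrm{d}\sigma$. By \eqref{eq:first-comp-cond}, the $e\cdot\nabla w$ term vanishes, although in Theorem~\ref{thm:trace-continuous} it carried a strict sign. Since $T\in\overline\Sigma$ and the cone is convex, $T\cdot\eta\le0$, while $-e\cdot\eta=-c_{\Sigma,e}$ holds $w\,\mathrm{d}\sigma$-a.e. by \eqref{eq:second-comp-cond}. Hölder's inequality on the gradient term then closes the estimate exactly as before, with $c_\Sigma$ replaced by $c_{\Sigma,e}$. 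The constant obtained is $C_\sharp$, computed from $h$ using $\int_\Sigma|\nabla h|^{p'}h^{p^*}\ldots$, which is proportional to $I_{\Sigma,w,e}$ by the exponent identity $(\frac{d-p}{p-1}+1)p'=(d-1)p'$. Extension to all of $\dot W^{1,p}(\Sigma,w)$ follows by density and Theorem~\ref{thm:trace-continuous}'s trace operator. The one additional check is that the inequality is unaffected by the replacement of $\mathrm{d}\sigma$-a.e.\ by $w\,\mathrm{d}\sigma$-a.e., and this is harmless because only the $w\,\mathrm{d}\sigma$ measure appears.

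\emph{Step 3: attainment.} We check directly that $u_{1,1}=h$ gives equality. With source equal to target, $T=\mathrm{id}$. Then every AM--GM step is an equality since $D^2\varphi=I$. Hölder is an equality because $\nabla h$ is parallel to $-(x-e)$ with $|\nabla h|^p\propto h^{p^*}\ldots$, which is the defining feature of $h$. Finally, on $\partial\Sigma$ we have $x\cdot\eta=0$ by conical homogeneity, so $(x-e)\cdot\eta=c_{\Sigma,e}$ $w\,\mathrm{d}\sigma$-a.e. The general $u_{C,\lambda}$ follows by invariance of both sides under $u\mapsto Cu$ and under the scaling $u\mapsto\lambda^{\frac{d-p}{p}}u(\lambda\cdot)$. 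The latter preserves both norms because $\Sigma$ is a cone and $w$ is $a$-homogeneous, with $p_*$ and $p$ matching the effective dimension $d$. We must also confirm that $h\in\dot W^{1,p}(\Sigma,w)$ via truncation, and that its trace norm is finite; this is where the assumption that $w\,\mathrm{d}\sigma\neq0$ yields a nonzero left side.

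The \textbf{main obstacle} is the rigorous boundary integration by parts for the merely $BV$-regular Brenier map, in particular justifying $\Tr T\cdot\eta\le 0$ and passing the singular part of $D^2\varphi$ with the correct sign. I would rely on the same regularisation used in the proof of Theorem~\ref{thm:trace-continuous}, approximating $\varphi$ by convex functions and using that the distributional Hessian dominates its absolutely continuous part.
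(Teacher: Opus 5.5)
Your Steps 1 and 2 follow the paper's argument. Integrability comes from $\dist(e,\overline\Sigma)>0$ and $(d-1)p'>d$. The inequality then follows from Proposition~\ref{prop:algebra-transport-map} with $c=c_{\Sigma,e}$, the truncated target $c_R\chi_RG_0$, and the one-variable optimization of $\Phi(t)=t^{p_*}/(At-B)$. One correction: you cannot pass to $\dot W^{1,p}(\Sigma,w)$ through the trace operator of Theorem~\ref{thm:trace-continuous}. Its transversality hypothesis generally fails here; for $\Sigma=\R^L\times(0,\infty)^m\times(0,\infty)$ the lateral faces have $e\cdot\eta=0$, so $c_\Sigma=0$. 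Rerun the same Cauchy-sequence argument with $C_\sharp$ instead.

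For attainment you take a genuinely different route. You run the transport chain with source and target both equal to $c_0G_0$, so that $T=\mathrm{id}$ and every link is tight. The paper's proof does not return to the chain. It checks that $G_0$ is weakly weighted $p$-harmonic, $\diver(w|\nabla G_0|^{p-2}\nabla G_0)=-\kappa^{p-1}|x-e|^{-d}\partial_{x_N}w=0$. It then integrates $\chi_RG_0\,\diver\mathcal V$ using the boundary flux $\mathcal V\cdot\eta=c_{\Sigma,e}\kappa^{p-1}w|x-e|^{-d}$. This gives $E_G=\kappa^pI_{\Sigma,w,e}=c_{\Sigma,e}\kappa^{p-1}B_G$, from which the quotient equals $C_\sharp$ by direct computation.

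Your route is more conceptual: it explains why the transport constant is exact, and it is the chain the rigidity proof later runs backwards. To be complete, however, it needs two checks you omit.

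First, the chain was established only for compactly supported sources. At $T=\mathrm{id}$ it reduces to the identity $d\int_\Sigma v^{p_*}w\,\dd x+c_{\Sigma,e}\int_{\partial\Sigma}v^{p_*}w\,\dd\sigma=-\int_\Sigma\nabla(v^{p_*})\cdot(x-e)\,w\,\dd x$. Prove it directly from three ingredients:
\begin{itemize}
\item $\diver(w(x-e))=dw$, which follows from Euler's identity and $e\cdot\nabla w=0$;
\item $-(x-e)\cdot\eta=c_{\Sigma,e}$ $w\,\dd\sigma$-a.e., where you wrote the sign as $+c_{\Sigma,e}$;
\item a cutoff at infinity, whose error is $O(R^{d-(d-1)p'})$.
\end{itemize}

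Second, equality along the chain gives only $\mathcal J(v)^{p_*}=\Phi(\mathcal K(v))$. To reach $\min\Phi=C_\sharp^{-p_*}$ you must also verify $\mathcal K(v)=t_0$, that is, $\|\nabla G_0\|^p_{L^p(\Sigma,w)}=\kappa^pI_{\Sigma,w,e}$. This follows from $(\kappa+1)p=(d-1)p'$. The identity you wrote, with $p'$ on the left, holds only for $p=2$.

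The paper's flux computation sidesteps both the chain and the optimization, and shows as a by-product that $G_0$ solves the Euler--Lagrange problem with its natural boundary flux. Yours makes the equality structure of the transport argument explicit.
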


We call a nonzero function $u\in\dot W^{1,p}(\Sigma,w)$
an \emph{extremal} if equality holds in \eqref{eq:sharp-trace}. For the
rigidity argument we shall use the positivity of nonnegative extremals.
This follows from the interior Euler--Lagrange equation and the local
Harnack inequality for equations of $p$-Laplacian type; the details are
given in Section~\ref{sec:preliminaries}.

\bigskip

For the rigidity classification, we make explicit the lineality space
associated with the open convex cone $\Sigma$. Since $\Sigma$ is open,
we define
\[
\operatorname{L}(\Sigma)
:=
\overline{\Sigma}\cap(-\overline{\Sigma}).
\]

The lineality space determines the possible translation symmetries in
the classification of extremals. In particular, translations along
$\operatorname{L}(\Sigma)$ preserve the cone. More precisely, if
$z\in\operatorname{L}(\Sigma)$, then
\[
 \Sigma+z=\Sigma.
\]
Therefore,

\[
x+t z\in\Sigma
\qquad\text{for every } x \in \Sigma,\,t\in\mathbb R.
\]

Under the standing positivity and concavity assumptions, the weight is
automatically invariant along these directions. Indeed, if
$z\in\operatorname{L}(\Sigma)$ and $x\in\Sigma$, then the function
\[
t\longmapsto w(x+t z)^{1/a}
\]
is positive and concave on $\mathbb R$. Hence it must be constant.
Consequently,
\[
w(x+t z)=w(x)
\qquad
\text{for every }t\in\mathbb R,
\]
and therefore
\[
\nabla w(x)\cdot z=0
\]
for almost every $x\in\Sigma$.

We can now state the rigidity result.

\begin{theorem}[Complete classification of extremals]\label{thm:classification}
Assume the hypotheses of Theorem~\ref{thm:sharp-trace}. Then a nonzero function
$u\in\dot W^{1,p}(\Sigma,w)$ is an extremal of \eqref{eq:sharp-trace}
if and only if
\[
u(x)
=
C\lambda^{\frac{d-p}{p}}
\left|\lambda(x-x_0)-e\right|^{-\frac{d-p}{p-1}}
\]
for some
\[
C\neq0,\qquad
\lambda>0,\qquad
x_0\in\operatorname{L}(\Sigma).
\]
\end{theorem}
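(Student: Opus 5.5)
The plan has two parts: sufficiency, which is a change of variables, and necessity, which is the equality analysis of the mass-transportation proof of Theorem~\ref{thm:sharp-trace}.

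\emph{Sufficiency.} Let $x_0\in\operatorname{L}(\Sigma)$ and set $\tau(x)=x-x_0$. Then $\tau(\Sigma)=\Sigma$ and $\tau(\partial\Sigma)=\partial\Sigma$, and $\tau$ preserves $\eta$ and $\mathrm{d}\sigma$. It also preserves $w$, by the invariance of $w$ along $\operatorname{L}(\Sigma)$ shown before the statement. Hence both sides of \eqref{eq:sharp-trace} are unchanged under $u\mapsto u\circ\tau$. Since the functions $u_{C,\lambda}$ of Theorem~\ref{thm:sharp-trace} are extremals, so are $u_{C,\lambda}\circ\tau$.

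\emph{Necessity.} Let $u$ be an extremal.

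\textbf{Reduction to a positive extremal.} Since $|\nabla|u||=|\nabla u|$ a.e., $|u|$ is also an extremal. By the Euler--Lagrange equation and Harnack's inequality from Section~\ref{sec:preliminaries}, $|u|>0$ in $\Sigma$. Because $u$ is continuous in $\Sigma$ (it solves a $p$-Laplacian-type equation), $u$ has constant sign. We may therefore assume $u>0$ and, after normalising, $\int_{\partial\Sigma}u^{p_*}w\,\mathrm{d}\sigma=1$.

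\textbf{Rerunning the transport argument.} Take the source density $F=u^{p^*}w$ (suitably normalised, as in the proof of Theorem~\ref{thm:sharp-trace}) and a reference target $G=c\,|y-e|^{-(d-1)p'}w(y)$ on $\Sigma$. Let $T=\nabla\varphi$ be the Brenier map, with $\varphi$ convex, pushing $F$ forward to $G$.

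Equality in the final estimate forces equality in every intermediate inequality. These are:
\begin{enumerate}
\item the weighted AM--GM/concavity inequality $G(T)^{-1/d}\dots\le\frac1d(\Delta_A\varphi+\frac{\nabla w\cdot(T-x)}{w})$, which uses the concavity of $w^{1/a}$ and the homogeneity of $w$;
\item replacement of the Aleksandrov Laplacian by the distributional Laplacian, i.e.\ absence of a singular part of $D^2\varphi$;
\item the boundary integration by parts, where the term $\operatorname{Tr}(T-e)\cdot\eta\,u^{p_*}w$ is split into $(T\cdot\eta)\le0$ plus $-e\cdot\eta=-c_{\Sigma,e}$;
\item Hölder/Young in $\int u^{\cdot}\nabla u\cdot(T-e)\,w$.
\end{enumerate}

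From equality in (1), $D^2\varphi$ is a.e.\ a multiple $\mu(x)\,\mathrm{Id}$ with $\mu>0$. Following Cordero-Erausquin--Nazaret--Villani and its weighted version for cones (Balogh--Don--Kristály, Ciraolo--Figalli--Roncoroni), we get that $\mu$ is constant, say $\mu=\lambda^{-1}$, on the connected set $\{u>0\}=\Sigma$, so $T(x)=\lambda^{-1}x+b$. Equality in the concavity of $w^{1/a}$ along segments $[x,T(x)]$ forces $w$ to be affine-compatible along $b$.

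Equality in (4) gives $\nabla u^{\,\cdot}$ proportional to $-(T-e)|T-e|^{\cdot}$, i.e.\ $-\nabla(u^{-(p-1)/(d-p)})\parallel (T(x)-e)$ with a constant factor. Integrating this yields $u=C\lambda^{(d-p)/p}|\lambda(x-x_0)-e|^{-(d-p)/(p-1)}$ with $x_0=-\lambda b$.

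\textbf{Identifying $x_0$.} Since $T$ maps $\Sigma$ into (and onto up to null sets) $\overline\Sigma$, we have $\lambda^{-1}\Sigma+b\subset\overline\Sigma$, hence $b\in\overline\Sigma$. Applying the same to $T^{-1}$, which pushes $G$ back to $F$ and whose support is all of $\Sigma$, gives $-b\in\overline\Sigma$. Thus $b\in\operatorname{L}(\Sigma)$, and so $x_0\in\operatorname{L}(\Sigma)$. Equality in (3), namely $T\cdot\eta=0$ $w\,\mathrm{d}\sigma$-a.e., is consistent with this, since $x\cdot\eta=0$ and $b\cdot\eta=0$ for $b$ in the lineality space.

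\textbf{Main obstacle.} The hardest step is regularity: showing that equality forces $\mu$ to be constant and $D^2_D\varphi$ to have no singular part when $u$ is only in $\dot W^{1,p}(\Sigma,w)$ and the weight degenerates on $\partial\Sigma$. I would follow the Figalli--Maggi--Pratelli/Cordero-Erausquin--Nazaret--Villani strategy. First show that $\nabla\varphi\in W^{1,1}_{\mathrm{loc}}(\Sigma)$, using that the distributional Laplacian equals its absolutely continuous part. Then use that $D^2\varphi=\mu\,\mathrm{Id}$ with $\nabla\varphi\in W^{1,1}_{\mathrm{loc}}$ forces $\mu$ to be constant on the connected open cone, by the curl-free condition $\partial_i\mu=0$. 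Positivity of $u$ from Harnack, in the reduction step, is what makes $\Sigma$ the full support and allows this argument.
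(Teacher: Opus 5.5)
Your overall architecture matches the paper's: Harnack positivity, transporting $u^{p^*}w$ onto the extremal profile, equality throughout the chain, AM--GM equality giving $D^2\varphi=\lambda I_N$, the curl argument, and the support argument for $\pm b\in\overline\Sigma$. Your sufficiency part and your constant-sign reduction via continuity are fine.

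\textbf{The gap: your step (2) is not part of a chain that is actually proved for the extremal.} For a general $u\in\dot W^{1,p}(\Sigma,w)$ and an unbounded target, the transport inequality is obtained only after approximating $u$ by $u_j\in C_c^\infty(\overline\Sigma)$. Since $u_j\to u$ merely in $L^{p^*}(\Sigma,w)$, the term $\int u_j^{p_*}w\,\mathrm{d}\operatorname{tr}(D^2_s\varphi)$ cannot be passed to the limit and must be discarded, as in Proposition~\ref{prop:bounded-general} and Theorem~\ref{thm:general}. Equality in that chain says nothing about $D^2_s\varphi$. Your plan to get $\nabla\varphi\in W^{1,1}_{\mathrm{loc}}$ ``using that the distributional Laplacian equals its absolutely continuous part'' therefore assumes exactly what is missing.

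The missing idea is to prove \eqref{singularineq}, with $\int_\Sigma u^{p_*}w\,\mathrm{d}\mu_s$ kept on the left, for the extremal itself. This requires three things:
\begin{itemize}
\item the interior continuity of $u$, so that integrating against a singular measure makes sense;
\item approximants $\widetilde u_j=\zeta\,((\theta u)*\rho_{\varepsilon_j})+(1-\zeta)u_j$ that converge uniformly on a compact $K\subset\Sigma$ while coinciding with $u_j$ near $\partial\Sigma$, so that the traces still converge in $L^{p_*}(\partial\Sigma,w\,\mathrm{d}\sigma)$ (this trace-preserving requirement is specific to the boundary problem);
\item the Lipschitz truncations $\varphi_k$ of Lemma~\ref{lem:trucation-potential}, together with the check $\Sigma\subset\operatorname{int}(\operatorname{dom}\varphi)$, so that $\operatorname{tr}(D^2_s\varphi_k)=\mu_s$ on $K$ for large $k$.
\end{itemize}
Only then does $\inf_K u>0$ give $\mu_s(K)=0$.

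\textbf{Wrong target.} Your target $c\,|y-e|^{-(d-1)p'}w$ is incorrect. The function $|y-e|^{-(d-1)p'}=G_0^{p_*}$ is the integrand of $I_{\Sigma,w,e}$. The target must be $v^{p^*}w$ with $v=c_0G_0$, i.e.\ proportional to $|y-e|^{-dp'}w$. With your choice, $v\propto|y-e|^{-\kappa(d-1)/d}$. The H\"older bound $\int v^{p_*}w\le \bigl(\int v^{p^*}|y-e|^{p'}w\bigr)^{p_*/p^*}I^{1-p_*/p^*}$ is then strict, so the chain constant is strictly below $C_\sharp^{-p_*}$, and an extremal forces no equality at all.

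\textbf{Undetermined constant in the H\"older route.} Equality in H\"older gives $\nabla\bigl(u^{-p/(d-p)}\bigr)=c\,|T-e|^{p'-2}(T-e)$, not the exponent you wrote. Integrating gives $u^{-p/(d-p)}=c'|T(x)-e|^{p'}+C_1$, and H\"older alone does not force $C_1=0$. The paper instead reads $u$ off the Jacobian equation. First establish $T(x)=\lambda(x-x_0)$ with $x_0\in\operatorname{L}(\Sigma)$. Then homogeneity and the invariance of $w$ along $\operatorname{L}(\Sigma)$ (which follows from positivity and concavity, not from an equality case) give $w(T(x))=\lambda^a w(x)$. The relation $u^{p^*}w=\lambda^N v^{p^*}(T)\,w(T)$ then determines $u$ exactly.
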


\begin{remark}\label{rem:vector-e}
The transport-compatible regime is separate from the transversality
regime and is based on a different set of geometric assumptions.
Thus, the assumptions used in the transversality setting are not imposed
here; in particular, we do not require $-e\in D$.
\end{remark}

\subsection{Examples and scope of the main results}

The transport-compatible regime includes both monomial and
nonmonomial weights. To illustrate this, write
$x=(z,y,t)\in\mathbb R^L\times\mathbb R^m\times\mathbb R$,
where $L+m+1=N$, and consider
\[
\Sigma=\mathbb R^L\times(0,\infty)^m\times(0,\infty),
\qquad e=-e_N.
\]
Two admissible choices are
\begin{equation}\label{eq:first-example}
w(z,y,t)=\prod_{i=1}^m y_i^{\alpha_i},
\qquad \alpha_i>0,\quad \sum_{i=1}^m\alpha_i=a,    
\end{equation}
and, for $m\geq2$,
\begin{equation}\label{eq:sec-example}
w(z,y,t)=\bigl(\min\{y_1,\ldots,y_m\}\bigr)^a.
\end{equation}
In both cases, $w^{1/a}$ is concave, $w$ is independent
of $t$, and $w$ vanishes on the lateral coordinate faces.
The weighted boundary measure is therefore concentrated
on the base $t=0$, where $e\cdot\eta=1$. Since
$\operatorname{dist}(e,\Sigma)=1$, these triples are
transport-compatible. The second example~\eqref{eq:sec-example} shows that the
sharp results apply to nonmonomial weights that need not
be differentiable throughout the interior. Moreover,
\[
\operatorname{L}(\Sigma)
=\mathbb R^L\times\{0\}^{m+1},
\]
so the translation parameters in the classification
correspond precisely to the Euclidean factor; they
disappear when $L=0$.

The two geometric regimes considered in this paper are mutually exclusive for a fixed triple under the standing assumptions $a>0$ and
$w>0$ in $\Sigma$.

They share the same underlying transport framework, but impose different
geometric conditions on the interaction between the cone, the weight, and
the distinguished direction. In the transverse regime, the assumptions
imply, almost everywhere,
\[
e\cdot\nabla w(x)
\leq
-a\,w(-e)^{1/a}w(x)^{1-1/a}<0,
\]
whereas transport compatibility requires
\[
\frac{\partial w}{\partial x_N}=0.
\]
Thus, for a given triple $(\Sigma,w,e)$, these two conditions cannot
simultaneously hold. For example, if
\[
\Sigma_b=\{(x',t):t>b|x'|\},\qquad
w(x',t)=t^a,\qquad e=-e_N,\qquad b>0,
\]
then $e\cdot\eta=(1+b^2)^{-1/2}>0$ at regular boundary
points, whereas
\[
e\cdot\nabla w=-a t^{a-1}<0.
\]
Thus Theorem~\ref{thm:trace-continuous} applies, but the compatibility condition
required for the sharp results fails.

These examples clarify the scope of the paper: the same
homogeneous density governs the interior energy and the
boundary trace, while compatibility determines the regime
in which we identify the best constant and classify all
extremals. This boundary problem complements the weighted
interior inequalities discussed above and includes
nonmonomial weights and cones with nontrivial lineality.

\bigskip

\paragraph{Sharpness with a nonmonomial weight:}
An explicit application of Theorem~\ref{thm:sharp-trace} is obtained by taking
\[
\Sigma=(0,\infty)^N,\qquad e=-e_N,\qquad
w(x)=\bigl(\min\{x_1,\ldots,x_{N-1}\}\bigr)^a.
\]
The function $w^{1/a}$ is concave and homogeneous of degree
one, and $w$ is independent of $x_N$. Moreover, the weight
vanishes on every lateral coordinate face. Thus the weighted
boundary measure is concentrated on
\[
\Gamma=(0,\infty)^{N-1}\times\{0\},
\]
where $e\cdot\eta=1$. Since
$\operatorname{dist}(e,\Sigma)=1$, the triple is
transport-compatible, with $c_{\Sigma,e}=1$.
Theorem~\ref{thm:sharp-trace} therefore yields the sharp trace inequality,
attained by
\[
u(x',x_N)=C\lambda^{(d-p)/p}
\left(
\lambda^2|x'|^2+(\lambda x_N+1)^2
\right)^{-\frac{d-p}{2(p-1)}},
\qquad C\neq0,\quad \lambda>0,
\]
where $d=N+a$. In particular, sharpness does not require
a monomial weight or differentiability of the weight
throughout the interior.

\bigskip

\paragraph{Rigidity on cones with lineality:}
To illustrate the translation parameters in Theorem~\ref{thm:classification},
let $0\leq L\leq N-2$, set $m=N-L-1$, and consider
\[
\Sigma=\mathbb R^L\times(0,\infty)^m\times(0,\infty),
\qquad
w(z,y,t)=\prod_{i=1}^m y_i^{\alpha_i},
\qquad
e=-e_N,
\]
where $\alpha_i>0$ and $\sum_i\alpha_i=a$.
The weighted geometric mean $w^{1/a}$ is concave, and
the same boundary argument gives transport compatibility.
Furthermore,
\[
\operatorname{L}(\Sigma)
=\mathbb R^L\times\{0\}^{m+1},
\]
and $w$ is invariant under translations in the $z$ variables.
Theorem~\ref{thm:classification} states that all nonzero extremals are precisely
\[
u(z,y,t)=C\lambda^{(d-p)/p}
\left(
\lambda^2|z-z_0|^2+\lambda^2|y|^2+(\lambda t+1)^2
\right)^{-\frac{d-p}{2(p-1)}},
\]
with $C\neq0$, $\lambda>0$, and $z_0\in\mathbb R^L$.
Thus the translation freedom is determined exactly by
the lineality of the cone. In the pointed case $L=0$,
only amplitude and dilation remain.

\subsection{General strategy of the proofs}

The proofs are built around the mass-transportation method, but transport
plays a different role in the continuous, sharp, and rigidity parts of
the paper. We briefly describe the main ideas.

To prove the continuous trace inequality, we associate with a nonnegative
function $u$ a probability measure in the cone and transport it, through
the gradient of a convex transport potential, onto a suitable target
measure. The Monge--Amp\`ere equation and the concavity of the homogeneous
weight allow us to compare the Jacobian of the transport map with its
divergence. Since the transport potential need not be twice continuously differentiable, its Hessian must
be understood in the Alexandrov sense, which leads to an
integration-by-parts argument at the level of measures for the full distributional Hessian, including its singular part. Convexity of the
cone controls the boundary contribution produced by the transport map,
while the transversality condition recovers the boundary trace. After
H\"older's inequality and the push-forward relation, a shifted power
profile is chosen as target density. A one-dimensional optimization then
gives the trace estimate and the explicit constant in
Theorem~\ref{thm:trace-continuous}.

The same argument explains why this first estimate need not be sharp. The
weighted Gauss--Green formula contains an additional term involving the
derivative of the weight in the distinguished direction. Under the general
assumptions this term does not vanish, so the boundary step produces a
genuine loss. From a variational point of view, the same obstruction is
reflected in the interior Euler--Lagrange equation: the natural shifted
power profile is not, in general, weighted $p$-harmonic.

In the transport-compatible regime the obstruction disappears. The
condition on the weight removes the additional divergence term, while
the condition on the boundary makes the boundary relation exact. The
transport argument can therefore be repeated with the same boundary estimate.
Choosing the normalized shifted power profile as both source and target
makes the transport map the identity, and the inequalities in the transport
chain become equalities. In particular, the remaining cone boundary contribution vanishes because $x \cdot \eta =0$ on the conical boundary. This gives the optimal constant and the
attainment statement in Theorem~\ref{thm:sharp-trace}.

The rigidity argument follows the same chain in the opposite direction.
If $u$ is an extremal, equality must hold at every stage of the transport
proof. The positivity of a nonnegative extremal allows us to eliminate the
singular part of the Hessian of the convex transport potential. Equality in the
arithmetic--geometric mean step then forces the Hessian to be a constant
multiple of the identity, and hence the transport map is affine,
\[
T(x)=\lambda x+b,
\qquad
\lambda>0.
\]
Since both transported measures have support $\overline\Sigma$, the affine
map preserves the cone. 
 
The Monge--Amp\`ere equation then determines the extremal explicitly, leading
to Theorem~\ref{thm:classification}.

The paper is organized as follows. Section~\ref{sec:preliminaries}
collects the geometric, weighted and analytic facts used throughout the
paper, together with the basic optimal-transport ingredients. In
Section~\ref{sec:proofs} we prove the three main results: first the
continuous trace inequality, then the sharp inequality and attainment in
the transport-compatible regime, and finally the rigidity and complete
classification of extremals.
\section{Preliminaries}
\label{sec:preliminaries}
\subsection{Geometry and homogeneous weights}

We collect here two consequences of the homogeneity and concavity
assumptions that will be used in the proof of the continuous trace
inequality.

The weighted Gauss--Green identity for a sufficiently regular,
compactly supported function $F$ is
\begin{equation}
\label{eq:gauss-green-prelim}
\int_{\partial\Sigma}
Fw\,e\cdot\eta\,\mathrm{d}\sigma
=
\int_\Sigma
e\cdot\nabla F\,w\,\mathrm{d}x
+
\int_\Sigma
F\,e\cdot\nabla w\,\mathrm{d}x.
\end{equation}

Under the geometric assumptions of
Theorem~\ref{thm:trace-continuous}, the last term has a definite sign.
Indeed, the concavity of $w^{1/a}$ and the fact that $-e\in\Sigma$ imply
\begin{equation}
\label{eq:weight-sign-prelim}
e\cdot\nabla w(x)
\leq
-a\,w(-e)^{1/a}w(x)^{1-1/a}
<0
\qquad
\text{for almost every }x\in\Sigma.
\end{equation}

Consequently, if $F$ is nonnegative and not identically
zero in $\Sigma$, then
\[
\int_\Sigma
F\,e\cdot\nabla w\,\mathrm{d}x
<0.
\]

This is the precise source of the loss in the boundary step of the
transport argument for Theorem~\ref{thm:trace-continuous}.

In contrast, for a transport-compatible triple one has
\[
\frac{\partial w}{\partial x_N}=0
\qquad
\text{almost everywhere in }\Sigma,
\]
and therefore \eqref{eq:gauss-green-prelim} reduces to
\[
\int_{\partial\Sigma}
Fw\,e\cdot\eta\,\mathrm{d}\sigma
=
\int_\Sigma
e\cdot\nabla F\,w\,\mathrm{d}x.
\]

Since
\[
e\cdot\eta=c_{\Sigma,e}
\qquad
w\,\mathrm{d}\sigma\text{-almost everywhere on }\partial\Sigma,
\]
we obtain the exact boundary identity
\begin{equation}
\label{eq:exact-boundary-prelim}
c_{\Sigma,e}
\int_{\partial\Sigma}
Fw\,\mathrm{d}\sigma
=
\int_\Sigma
e\cdot\nabla F\,w\,\mathrm{d}x.
\end{equation}

\subsection{Optimal transport and auxiliary results}\label{ssec:opt-transp}

The proofs below use the measure-preserving transport theorem by McCann in~\cite{McCann1995} and the Alexandrov
second-order differentiability theory for convex functions. Let $\mu$ and $\nu$ be two probability measures on $\mathds{R}^{N}$, with $\mu$ absolutely continuous with respect to the Lebesgue measure. The convex-gradient transport map between them
is written as
\[
T=\nabla\varphi,
\]
where $\varphi: \mathds{R}^{N}\to \mathds{R} \cup \{+\infty\}$ is proper, convex and lower semicontinuous such that

\[
T_\# \mu =  \nu
\]
meaning that, whenever $B$ is a Borel subset of $\mathds{R}^{N}$, one has
\[
\nu[B] = \mu[T^{-1}(B)]
\]
or equivalently, for every nonnegative Borel function $b:\mathds{R}^N \to \mathds{R}_+$,
\[
\int b(y)\,\mathrm{d}\nu(y) = \int b(T(x))\,\mathrm{d}\mu(x).
\]
By convexity, $\varphi$ is differentiable almost everywhere on the open convex set
\[
\Omega := \operatorname{int}(\operatorname{dom}\varphi).
\]
Its gradient satisfies
\[
\nabla \varphi \in (\mathrm{BV} \cap L^{\infty})_{\mathrm{loc}} (\Omega;\mathds{R}^{N})
\]
and $\mu$ is concentrated on $\Omega$.

As observed in~\cite{MaggiVillani2005} and also in Lemma~\ref{lem:global-lipschitz-regularity}, if the target measure has compact support, then $\varphi$ can be assumed to be Lipschitz, and, in particular
\[
\Omega = \mathds{R}^N.
\]

When both measures have densities, we use the almost-everywhere Jacobian
formula with the Alexandrov Hessian, as in
\cite{CorderoNazaretVillani2004}. Specifically, if $T=\nabla\varphi$
transports $f\dd x$ to $g\dd y$, then
\begin{equation}\label{eq:jacobian}
 f(x)=g(T(x))\det D_A^2\varphi(x)
 \quad f\dd x\text{-a.e.}
\end{equation}

We shall also use the positivity of nonnegative extremals in the rigidity
argument. Let $u\geq0$ be a nontrivial extremal of
\eqref{eq:sharp-trace}. Since variations compactly supported in $\Sigma$
do not affect the boundary constraint, the interior Euler--Lagrange
equation gives
\begin{equation}
\label{eq:interior-euler-lagrange}
-\operatorname{div}
\left(
w|\nabla u|^{p-2}\nabla u
\right)
=0
\qquad
\text{in }\Sigma
\end{equation}
in the weak sense.

Since $w$ is continuous and strictly positive in the interior of the
cone, for every compact set $K\subset\Sigma$ there exist constants
$0<m_K\leq M_K<\infty$ such that
\[
m_K
\leq
w(x)
\leq
M_K
\qquad
\text{for }x\in K.
\]
Thus
\[
A(x,\xi)
:=
w(x)|\xi|^{p-2}\xi
\]
satisfies locally the standard coercivity and growth conditions
\begin{equation}\label{eq:p-growth-bound}
  A(x,\xi)\cdot\xi
\geq
m_K|\xi|^p,
\qquad
|A(x,\xi)|
\leq
M_K|\xi|^{p-1}.  
\end{equation}
The local regularity theory for quasilinear equations of
$p$-Laplacian type, see for instance~\cite{Serrin1964}, applied to the structural conditions in~\eqref{eq:p-growth-bound}, implies that $u$ is locally Hölder continuous in $\Sigma$.
In particular, for every compact $K\subset\Sigma$, the mollifications $u_\varepsilon$
converge uniformly to $u$ on $K$. Since
$u\ge0$ is a weak nontrivial solution of~\eqref{eq:interior-euler-lagrange}, with the structural bounds~\eqref{eq:p-growth-bound}, the local Harnack inequality
implies $u>0$ in $\Sigma$. 
In particular,
\begin{equation}
\label{eq:local-positive-lower-bound}
\inf_K u>0
\qquad
\text{ for every compact }K\subset\Sigma.
\end{equation}
This local lower bound will be used in the equality analysis to eliminate
the singular part of the Hessian of the convex transport potential.

For $R>0$, we define the truncated domains
\[
\Sigma_R := \Sigma \cap B_R, \qquad B_R : = \{ x \in \mathds{R}^N \colon |x| < R\}.
\]
Throughout the remainder of the paper, we use the notation $\Sigma_R$ for these truncated domains.

\begin{lemma}\label{lem:weight}
For every $R>0$,
\[
 w\in W^{1,1}(\Sigma_R)\cap L^\infty(\Sigma_R),\qquad
 \operatorname{Tr} w=w|_{\partial\Sigma_R}.
\]
If $M_R=\max_{\overline{\Sigma}_R}w$ and $\pi_i$ is the orthogonal projection onto
$e_i^\perp$, then
\begin{equation}\label{eq:bound-dwi} \int_{\Sigma_R}|\partial_iw|\dd x
 \leq 2M_R\HH^{N-1}(\pi_i\Sigma_R), \qquad i=1,2,\cdots,N.   
\end{equation}

\end{lemma}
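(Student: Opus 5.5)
The plan is to use only that $w^{1/a}$ is concave, positive and continuous, together with convexity of $\Sigma_R$. Set $v:=w^{1/a}$. A concave function on the open convex set $\Sigma$ is locally Lipschitz, so $v$, and hence $w=v^a$, is differentiable almost everywhere and belongs to $W^{1,\infty}_{\mathrm{loc}}(\Sigma)$. Boundedness is immediate, since $w\in C(\overline\Sigma)$ and $\overline\Sigma_R$ is compact, which gives $0\le w\le M_R$ there. The key is to bound $\int_{\Sigma_R}|\partial_i w|\dd x$ globally, including near $\partial\Sigma$, where $w$ may fail to be Lipschitz (for instance, if $a<1$ the weight behaves like a root near the faces). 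This global bound then gives $w\in W^{1,1}(\Sigma_R)$, with the weak derivative equal to the a.e.\ pointwise derivative.

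For the estimate \eqref{eq:bound-dwi}, I would slice $\Sigma_R$ by lines parallel to $e_i$. For $y\in\pi_i\Sigma_R$, the fibre $\{t: y+te_i\in\Sigma_R\}$ is an open interval $(\alpha_y,\beta_y)$, since $\Sigma_R$ is convex. On it, $t\mapsto v(y+te_i)$ is concave, so it is nondecreasing on some subinterval $(\alpha_y,\tau_y]$ and nonincreasing on $[\tau_y,\beta_y)$. Therefore $g(t):=w(y+te_i)=v^a$ is also monotone on each piece, because $s\mapsto s^a$ is increasing on $[0,\infty)$. Its total variation is then
\[
\int_{\alpha_y}^{\beta_y}|g'(t)|\dd t=\bigl(g(\tau_y)-g(\alpha_y^+)\bigr)+\bigl(g(\tau_y)-g(\beta_y^-)\bigr)\le 2M_R .
\]
Here $g$ is locally absolutely continuous on the open interval, and monotone convergence handles the endpoints. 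Fubini then yields $\int_{\Sigma_R}|\partial_iw|\dd x\le 2M_R\,\HH^{N-1}(\pi_i\Sigma_R)$. The right-hand side is finite because $\pi_i\Sigma_R$ lies in an $(N-1)$-ball of radius $R$.

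To identify the weak derivative, I would take $\phi\in C_c^\infty(\Sigma_R)$. The support of $\phi$ is compact in $\Sigma$, and there $w$ is Lipschitz, so the classical integration by parts holds with the pointwise gradient. That gradient is integrable by the previous step, so $w\in W^{1,1}(\Sigma_R)$.

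For the trace identity, $\Sigma_R$ is a bounded Lipschitz domain, being the intersection of a Lipschitz cone with a ball. For $W^{1,1}\cap C(\overline{\Sigma_R})$ functions, the trace coincides with the boundary restriction. I would justify this by noting that the trace operator $W^{1,1}(\Sigma_R)\to L^1(\partial\Sigma_R)$ is continuous. I would approximate $w$ by $w_\delta(x):=w(x_\delta+(1-\delta)(x-x_\delta))$, with $x_\delta$ a fixed interior point (or by a standard star-shaped dilation toward an interior point of $\Sigma_R$). These $w_\delta$ are Lipschitz up to the boundary, converge uniformly on $\overline{\Sigma_R}$ to $w$, and converge in $W^{1,1}$ by dominated convergence applied to the scaled gradients together with the $L^1$ bound above.

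The main obstacle is this last step: showing $\nabla w_\delta\to\nabla w$ in $L^1$ near the boundary. If it becomes awkward, I would instead invoke the general fact that for $u\in W^{1,1}(\Omega)\cap C(\overline\Omega)$ on a Lipschitz domain, $\Tr u=u|_{\partial\Omega}$. That fact follows from the Lebesgue-point characterization of the trace via averages over interior cones.
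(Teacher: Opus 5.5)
Your proposal is correct and follows the paper's proof. Concavity of $w^{1/a}$ on each convex fibre of $\Sigma_R$ makes $w$ nondecreasing then nonincreasing along the fibre, so its variation there is at most $2M_R$, and Fubini gives \eqref{eq:bound-dwi}; the paper only asserts the trace identity, and your fallback (on the bounded convex, hence Lipschitz, domain $\Sigma_R$, a $W^{1,1}\cap C(\overline{\Sigma_R})$ function has trace equal to its restriction) justifies it. One small correction to your dilation route: $\nabla w_\delta\to\nabla w$ in $L^1(\Sigma_R)$ follows from continuity of dilations in $L^1$, not from dominated convergence, because $\nabla w$ is not continuous and so the pointwise convergence that dominated convergence would need is not available.
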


\begin{proof}
    The function $w$ is locally Lipschitz, even when $0<a<1$, because it is the $a$th power of a finite, concave, and positive function in $\Sigma$. For each fixed $i$, let $\ell_{i}$ be a line parallel to $e_i$. Convexity implies that $I_i=\Sigma_R\cap\ell_i \subset \R$ is an interval whenever it is nonempty.
    On every compact subinterval, the restriction of $w^{1/a}$ is nondecreasing up to a maximum and then nonincreasing with each part possibly empty. For $a>0$ the increasing map $s \mapsto s^a$ preserves this property.
    Hence the variation of $w$ on that subinterval is at most $2M_R$. Exhaustion of $I_i$ by compact subintervals and local absolute continuity give
    \[
    \int_{I_i} |\partial_iw| \dd \HH^1 \le 2 M_R.
    \]
    Fubini's theorem yields~\eqref{eq:bound-dwi}. Since $w$ is bounded on $\Sigma_R$ this proves
    \[
    w \in W^{1,1}(\Sigma_R).
    \]
    Its Sobolev trace agrees with its continuous boundary values.
\end{proof}

\begin{lemma}[Global Lipschitz regularity of the convex transport potential]\label{lem:global-lipschitz-regularity}
Let $\mu\ll\mathcal{L}^N$ be a probability measure and let $\nu$ be a probability measure with
compact support. Let $\varphi_0$ be the convex transport potential transporting
$\mu$ onto $\nu$. 
Then there exists a convex potential $\varphi$, finite and globally Lipschitz on $\R^N$, with
\[
\nabla\varphi=\nabla\varphi_0
\qquad\mu\text{-almost everywhere}.
\]

More precisely, if
\[
\operatorname{supp}\nu\subset \overline{\Sigma} \cap \overline{B}_R,
\]
then
\[
\operatorname{Lip}(\varphi)\leq R.
\]

\end{lemma}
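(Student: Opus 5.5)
The plan is to replace $\varphi_0$ by the convex function obtained from its Legendre transform, cut off outside the target support. Let $\psi_0=\varphi_0^*$ be the conjugate of $\varphi_0$, and set $K:=\operatorname{supp}\nu\subset\overline\Sigma\cap\overline B_R$. Define
\[
\varphi(x):=\sup_{y\in K}\bigl(x\cdot y-\psi_0(y)\bigr),\qquad x\in\R^N .
\]
This is the restricted Legendre transform. We then verify three properties in turn: $\varphi$ is finite, convex and $R$-Lipschitz; $\nabla\varphi=\nabla\varphi_0$ holds $\mu$-a.e.; and consequently $\nabla\varphi$ still pushes $\mu$ forward to $\nu$.

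\emph{Step 1 (finiteness and Lipschitz bound).} By McCann's theorem, $\nu=(\nabla\varphi_0)_\#\mu$, and $\mu$ is concentrated on the set where $\varphi_0$ is differentiable. At such a point $x$ we have $y=\nabla\varphi_0(x)\in\partial\varphi_0(x)$, so $\psi_0(y)=x\cdot y-\varphi_0(x)<\infty$. Hence $\psi_0<\infty$ on a set $E$ of full $\nu$-measure, and $E$ is dense in $K$. The supremum defining $\varphi$ is therefore taken over a nonempty family of affine functions. Each member has slope $y\in K\subset\overline B_R$, so each is $R$-Lipschitz. Their supremum is convex and $R$-Lipschitz, and it is finite as soon as it is finite at one point. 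For finiteness, note that $\psi_0\ge x_0\cdot y-\varphi_0(x_0)$ for any $x_0$ in the domain of $\varphi_0$. This gives
\[
x\cdot y-\psi_0(y)\le (x-x_0)\cdot y+\varphi_0(x_0)\le R|x-x_0|+\varphi_0(x_0),
\]
so $\varphi<\infty$ everywhere. Since $\psi_0$ is lower semicontinuous, replacing $K$ by $K\cap\operatorname{dom}\psi_0$ or by its closure does not change the supremum.

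\emph{Step 2 (agreement of gradients).} Fenchel's inequality gives $x\cdot y-\psi_0(y)\le\varphi_0(x)$, hence $\varphi\le\varphi_0$ everywhere. Let $x$ be a point where $\varphi_0$ is differentiable and $y:=\nabla\varphi_0(x)\in K$; this holds for $\mu$-a.e. $x$, because $\nu(K)=1$ forces $\nabla\varphi_0(x)\in K$ $\mu$-a.e. At such a point, $\varphi(x)\ge x\cdot y-\psi_0(y)=\varphi_0(x)$, so $\varphi(x)=\varphi_0(x)$. The affine function $z\mapsto z\cdot y-\psi_0(y)$ lies below $\varphi$, touches it at $x$, and has slope $y$; thus $y\in\partial\varphi(x)$. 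Conversely, take any $p\in\partial\varphi(x)$. Since $\varphi\le\varphi_0$ with equality at $x$, we get $\varphi_0(z)\ge\varphi(z)\ge\varphi(x)+p\cdot(z-x)=\varphi_0(x)+p\cdot(z-x)$, so $p\in\partial\varphi_0(x)=\{y\}$. Therefore $\partial\varphi(x)=\{y\}$, $\varphi$ is differentiable at $x$, and $\nabla\varphi(x)=\nabla\varphi_0(x)$. This settles the $\mu$-a.e. identity, and the push-forward relation $\nabla\varphi_\#\mu=\nu$ follows immediately.

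\emph{Main obstacle.} The delicate point is measure-theoretic: we need $\mu$-almost every point to be a differentiability point of $\varphi_0$ whose gradient lies in $K$, so that the touching argument in Step 2 applies. We get this from McCann's theorem, which says $\mu$ is concentrated on $\Omega=\operatorname{int}\operatorname{dom}\varphi_0$, where $\varphi_0$ is differentiable Lebesgue-a.e. Combined with $\mu\ll\mathcal L^N$, this makes the statement hold $\mu$-a.e. We also need some care with $\psi_0$ possibly taking the value $+\infty$ at points of $K$, but the convention $\sup\emptyset=-\infty$ for individual terms handles this, since only the finite terms contribute.
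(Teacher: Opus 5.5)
Your proof is correct and follows essentially the paper's route. Both use the restricted Legendre transform $\varphi(x)=\sup_{y\in K}\{x\cdot y-\varphi_0^*(y)\}$, Fenchel's inequality for finiteness and the $R$-Lipschitz bound, and Fenchel's equality at $\mu$-a.e.\ differentiability points to obtain $\varphi=\varphi_0$ and matching gradients. The differences are minor: you take $K=\operatorname{supp}\nu$ rather than $\overline\Sigma\cap\overline B_R$, which is harmless since subgradients of $\varphi$ still lie in the closed convex set $\overline\Sigma\cap\overline B_R$. Your inclusion $\partial\varphi(x)\subset\partial\varphi_0(x)=\{\nabla\varphi_0(x)\}$ also proves differentiability of $\varphi$ at those points directly, which is slightly cleaner than the paper's ``$\varphi_0-\varphi$ attains its minimum'' step.
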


\begin{proof}

Let $K = \overline\Sigma \cap \overline B_R$ and define the indicator function
\[
I_K (x) = \begin{cases}
    0,& \text{if }x \in K\\
    +\infty, & \text{otherwise}, 
\end{cases}
\]
and consider 
\[
\varphi (x)  := \sup_{y \in \R^N}\{x \cdot y - \varphi^*_0(y) - I_K(y)\} = \sup_{y \in K}\{x \cdot y - \varphi^*_0(y)\}
\]
because $I_K \equiv +\infty$ on $\R^N \setminus K$.  
Since $\nabla\varphi_0(x)\in K$ for $\mu$-almost every $x$, we may
choose a differentiability point $x_0$ of $\varphi_0$ and set
\[
z_0:=\nabla\varphi_0(x_0)\in K.
\]
By Fenchel's equality,
\[
\varphi_0^*(z_0)
=
x_0\cdot z_0-\varphi_0(x_0)<+\infty.
\]
In particular,
\[
\varphi(x)
\geq
x\cdot z_0-\varphi_0^*(z_0)>-\infty
\qquad\text{for every }x\in\mathbb{R}^N.
\]
Moreover, by Fenchel's inequality,
\[
\varphi_0^*(z)
\geq
x_0\cdot z-\varphi_0(x_0),
\qquad z\in K.
\]
Therefore, for every $x\in\mathbb{R}^N$ and $z\in K$,
\[
x\cdot z-\varphi_0^*(z)
\leq
(x-x_0)\cdot z+\varphi_0(x_0)
\leq
R|x-x_0|+\varphi_0(x_0),
\]
where we used $K\subset\overline{B_R}$. Taking the supremum over
$z\in K$, we obtain
\[
\varphi(x)
=
\sup_{z\in K}
\left\{x\cdot z-\varphi_0^*(z)\right\}
\leq
R|x-x_0|+\varphi_0(x_0)<+\infty.
\]
Hence
\[
\varphi:\mathbb{R}^N\to\mathbb{R}
\]
is finite.

For any $x,y \in \R^N$ and $z \in K$ we have
\[
x \cdot z -\varphi_0^*(z) = y \cdot z -\varphi_0^*(z) + z \cdot (x-y).
\]
Since $z \cdot (x-y) \le |z||x-y| \le R|x-y|$, taking the supremum over $z \in K$ we obtain
\[
\varphi(x) \le \varphi(y) + R|x-y|.
\]
Interchanging $x$ and $y$,
\[
\varphi(y) \le \varphi(x) + R|x-y|,
\]
concluding that $\mathrm{Lip} (\varphi) \le R$. By Rademacher's theorem $\varphi$ is differentiable almost everywhere on $\R^N$.
Moreover, for $\mu$ almost every $x$
\[
\nabla \varphi_0(x) \in K.
\]
By Fenchel's equality
\[
\varphi_0(x) + \varphi^*_0(\nabla \varphi_0(x))=x \cdot \nabla \varphi_0(x).
\]

On one hand
\begin{equation}\label{eq:fenchel-rel-1}
    \varphi(x) = \sup_{y \in K}\{x \cdot y - \varphi^*_0(y)\} \ge x \cdot \nabla \varphi_0(x) - \varphi^*_0(\nabla \varphi_0(x)) = \varphi_0(x).
\end{equation}
On the other hand
\[
\varphi(x) = \sup_{y \in K}\{x \cdot y - \varphi^*_0(y)\} \le \sup_{y \in \R^N}\{x \cdot y - \varphi^*_0(y)\} = \varphi^{**}_0(x) = \varphi_0(x).
\]
We conclude
\[
\varphi(x)=\varphi_0(x)
\]
for $\dd \mu$ almost every point. Note that for every $z$
\begin{align*}
     \varphi(z) \ge z \cdot \nabla \varphi_0(x) - \varphi_0^*(\nabla \varphi_0(x)) \implies
     \varphi(z) \ge z \cdot \nabla \varphi_0(x) + \varphi_0(x) -x \cdot\nabla \varphi_0(x)  
\end{align*}
Therefore
\[
\varphi(z) \ge \varphi_0(x)+ \nabla \varphi_0(x) \cdot (z-x) =  \varphi(x)+ \nabla \varphi_0(x) \cdot (z-x).
\]
We already know that $\varphi_0(z)-\varphi(z) \ge 0$ for every $z$, and equality holds at the source-almost-everywhere differentiability points considered above. Hence $\varphi_0 - \varphi$ attains its minimum at these differentiability points, and
therefore
\[
\nabla \varphi(x) = \nabla \varphi_0(x), \qquad \mu\text{-almost every } x. \qedhere
\]

\end{proof}

\begin{theorem}\label{thm:greenformula}
Let $\Sigma\subset\mathbb{R}^N$ be an open convex cone, and set
$\Sigma_R=\Sigma\cap B_R$ for $R>0$.
Assume that $w\in C(\overline{\Sigma})$ is positive in $\Sigma$
and homogeneous of degree $a>0$, and that $w^{1/a}$ is concave
in $\Sigma$. Let $\varphi:\mathbb{R}^N\to\mathbb{R}$ be convex and globally
Lipschitz, with
\[
    T=\nabla\varphi\in\overline{\Sigma}
    \qquad\text{Lebesgue-almost everywhere in }\Sigma.
\]
Then, for every $R>0$,
\begin{equation}\label{eq:bv-wT}
    wT\in \BV(\Sigma_R;\mathbb{R}^N).
\end{equation}
Moreover, for every nonnegative
$F\in C_c^1(\mathbb{R}^N)$,
\begin{align}\label{eq:greenformula}
    \int_\Sigma F\,\mathrm{d}[L_w\varphi]
    &=
    -\int_\Sigma wT\cdot\nabla F\,\mathrm{d}x
    +\int_{\partial\Sigma}
        Fw\,\operatorname{Tr}T\cdot\eta\,\mathrm{d}\sigma
    \notag\\
    &\leq
    -\int_\Sigma wT\cdot\nabla F\,\mathrm{d}x,
\end{align}
where $\eta$ is the outward unit normal, defined
$\mathcal{H}^{N-1}$-almost everywhere on $\partial\Sigma$,
and
\begin{equation}\label{eq:decomp-Lw}
    L_w\varphi
    :=
    \operatorname{div}(w\nabla\varphi)
    =
    \bigl(w\Delta_A\varphi+\nabla w\cdot T\bigr)
        \,\mathrm{d}x
    +w\,\operatorname{tr}(D_s^2\varphi).
\end{equation}
Here $D_A^2\varphi$ is the density of the absolutely continuous
part of the distributional Hessian,
$\Delta_A\varphi=\operatorname{tr}(D_A^2\varphi)$,
and $D_s^2\varphi$ is its singular part.
Both measures on the right-hand side of
\eqref{eq:decomp-Lw} are nonnegative.
\end{theorem}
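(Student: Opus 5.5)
The plan is to prove the three assertions in order: $wT\in\BV(\Sigma_R)$, the decomposition \eqref{eq:decomp-Lw} with nonnegativity of both parts, and then the Green formula together with its sign.

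\textbf{Step 1: the BV property.} Since $\varphi$ is convex and globally Lipschitz, $T=\nabla\varphi\in L^\infty(\mathbb R^N)$ with $|T|\le\operatorname{Lip}(\varphi)$. Moreover each component $T_j$ lies in $\BV_{\mathrm{loc}}$, and its distributional derivative is $D^2\varphi=D_A^2\varphi\,\mathrm dx+D_s^2\varphi$, a matrix of Radon measures that is nonnegative definite. Because $\Sigma_R$ is bounded, $T\in\BV(\Sigma_R)$. Lemma~\ref{lem:weight} gives $w\in W^{1,1}(\Sigma_R)\cap L^\infty$, and it is also continuous on $\overline\Sigma_R$. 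The product rule for a bounded $\BV$ function times a continuous $W^{1,1}\cap L^\infty$ function then yields $wT\in\BV(\Sigma_R)$, with
\[
D(wT)=T\otimes\nabla w\,\mathrm dx+w\,D_A^2\varphi\,\mathrm dx+w\,D_s^2\varphi .
\]
Continuity of $w$ removes any ambiguity about its precise representative on the jump set. To justify this rule cleanly, I would mollify $w$ on slightly shrunk domains, or use that $w$ is locally Lipschitz in $\Sigma$. Taking traces of this identity gives \eqref{eq:decomp-Lw}.

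\textbf{Step 2: nonnegativity of the two parts.} The singular part $w\operatorname{tr}(D_s^2\varphi)\ge0$ because $D^2\varphi$ is a nonnegative matrix measure and $w\ge0$. For the absolutely continuous part, $\Delta_A\varphi\ge0$, so it suffices to check $\nabla w\cdot T\ge0$ a.e. Set $h=w^{1/a}$, which is concave and $1$-homogeneous. For $y\in\overline\Sigma$ and $x\in\Sigma$, concavity and homogeneity give superadditivity, $h(x+ty)\ge h(x)+t\,h(y)\ge h(x)$. Hence $\nabla h(x)\cdot y\ge0$ at differentiability points, and therefore
\[
\nabla w\cdot T=a\,h^{a-1}\,\nabla h\cdot T\ge0 \quad\text{since } T\in\overline\Sigma \text{ a.e.}
\]
Here $h$ is extended continuously to $\overline\Sigma$ with $h\ge0$.

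\textbf{Step 3: the Green formula and its sign.} Fix $F\in C^1_c$ and choose $R$ with $\supp F\subset B_R$. Apply the BV Gauss--Green theorem on the Lipschitz domain $\Sigma_R$ to the BV field $wT$ with test function $F$. Since $F=0$ on $\partial B_R$, the boundary term lives only on $\partial\Sigma$, and I obtain
\[
\int F\,\mathrm d[\operatorname{div}(wT)]=-\int wT\cdot\nabla F+\int_{\partial\Sigma}F\,\operatorname{Tr}(wT)\cdot\eta .
\]
Because $w$ is continuous, $\operatorname{Tr}(wT)=w\operatorname{Tr}T$ $\mathcal H^{N-1}$-a.e. (compare Lemma~\ref{lem:weight}). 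For the sign, $T\in\overline\Sigma$ a.e. near the boundary, and the interior BV trace is an $L^1$-limit of averages over small half-balls, so $\operatorname{Tr}T\in\overline\Sigma$ because the cone is closed and convex. Convexity of $\Sigma$ makes it lie on one side of its supporting hyperplane at a regular boundary point $x$, namely $\{y:(y-x)\cdot\eta\le0\}$. Since the cone contains $0$ and is invariant under dilations, $x\cdot\eta=0$, so $y\cdot\eta\le0$ for every $y\in\overline\Sigma$. Thus $\operatorname{Tr}T\cdot\eta\le0$, and with $F,w\ge0$ the inequality in \eqref{eq:greenformula} follows.

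\textbf{Main obstacle.} The hardest step should be Step 1 near $\partial\Sigma$, where $w$ may degenerate and is only $W^{1,1}$, not Lipschitz, up to the boundary when $a<1$. There I must make sure that $D(wT)$ has no extra singular contributions and that the trace identity holds. My first approach would be to approximate by $w_\delta=\max(w,\delta)$ or by inner domains $\Sigma_R\cap\{\dist(\cdot,\partial\Sigma)>\delta\}$, where everything is Lipschitz, and then pass to the limit. The limit is controlled by the uniform bound \eqref{eq:bound-dwi}, by $|T|\le\operatorname{Lip}\varphi$, and by the continuity of $w$ up to the boundary.
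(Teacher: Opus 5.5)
Your proposal is correct and follows essentially the same route as the paper: BV regularity of $T$ from the Hessian measure of the convex Lipschitz potential, the product rule with the continuous $W^{1,1}\cap L^\infty$ weight from Lemma~\ref{lem:weight}, the BV Gauss--Green formula on $\Sigma_R$ with $\Tr(wT)=w\Tr T$, the sign via $\Tr T\in\overline\Sigma$ and $x\cdot\eta=0$, and $\nabla w\cdot T\ge0$ from concavity plus homogeneity (your superadditivity bound $\nabla h(x)\cdot y\ge h(y)\ge0$ is the paper's inequality $h(z)\le\nabla h(x)\cdot z$). The obstacle you flag is not a real one: membership in $\BV(\Sigma_R)$ concerns distributions on the open set $\Sigma_R$, so the product rule on compact subsets, where $w$ is locally Lipschitz, together with the bound $|D(wT)|(\Sigma_R)\le M_R|DT|(\Sigma_R)+\|T\|_{L^\infty}\int_{\Sigma_R}|\nabla w|\,\mathrm{d}x$ already suffices, exactly as in the paper, and no approximation up to $\partial\Sigma$ is needed.
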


\begin{proof}
The distributional Hessian of a finite convex function
$\varphi$ is a locally finite positive semidefinite
matrix-valued Radon measure. In particular,
\[
    |D^2\varphi|(K)<\infty
\]
for every compact set $K\subset\mathbb{R}^N$, where
$|D^2\varphi|$ denotes its total variation.
Since $\overline{\Sigma}_R$ is compact, this local finiteness
and the global Lipschitz continuity of $\varphi$ yield
\[
    T=\nabla\varphi
    \in \BV(\Sigma_R;\mathbb{R}^N)
       \cap L^\infty(\Sigma_R;\mathbb{R}^N).
\]

The Lebesgue decomposition of the distributional Hessian is
\[
    D^2\varphi
    =
    D_A^2\varphi\,\mathrm{d}x+D_s^2\varphi.
\]
Both parts are positive semidefinite.
Taking the matrix trace gives
\[
    \Delta^D\varphi
    =
    \Delta_A\varphi\,\mathrm{d}x
    +\operatorname{tr}(D_s^2\varphi),
\]
where
\[
    \Delta^D\varphi:=\operatorname{tr}(D^2\varphi),
    \qquad
    \Delta_A\varphi\geq0
    \quad\text{almost everywhere},
    \qquad
    \operatorname{tr}(D_s^2\varphi)\geq0.
\]

Lemma~\ref{lem:weight} gives
\[
    w\in W^{1,1}(\Sigma_R)\cap L^\infty(\Sigma_R).
\]
Moreover, $w$ has a continuous representative.
Since each component $T_i$ belongs to
$\BV(\Sigma_R)\cap L^\infty(\Sigma_R)$,
the product rule for bounded BV functions
(see Ambrosio, Fusco, and Pallara~\cite[Chapter~3]{AFP2000})
implies that $wT_i\in \BV(\Sigma_R)$ and
\begin{equation}\label{eq:productrule}
    D(wT_i)
    =
    w\,DT_i+T_i\nabla w\,\mathrm{d}x,
    \qquad i=1,\ldots,N.
\end{equation}
This identity holds as an equality of vector-valued
Radon measures on $\Sigma_R$.
The factor $w$ in $w\,DT_i$ denotes its continuous
representative; the second term is independent of the
choice of a Lebesgue representative of $T_i$.

Set
\[
    M_R:=\max_{\overline{\Sigma}_R}w.
\]
The product rule gives
\[
    |D(wT)|(\Sigma_R)
    \leq
    M_R|DT|(\Sigma_R)
    +
    \|T\|_{L^\infty(\Sigma_R;\mathbb{R}^N)}
    \int_{\Sigma_R}|\nabla w|\,\mathrm{d}x
    <\infty.
\]
Together with the boundedness of $wT$ on $\Sigma_R$,
this proves \eqref{eq:bv-wT}.
Taking the $i$th component of \eqref{eq:productrule}
and summing over $i$ yields~\eqref{eq:decomp-Lw}.

Choose $R>0$ sufficiently large that the compact set
$\operatorname{supp}F$ is contained in $B_R$.
The bounded convex domain $\Sigma_R$ has Lipschitz boundary.
The BV trace theorem and the continuity of $w$ give
\[
    \operatorname{Tr}(wT)
    =
    w\,\operatorname{Tr}T
    \qquad
    \mathcal{H}^{N-1}\text{-almost everywhere on }
    \partial\Sigma_R.
\]
The Gauss--Green formula for BV vector fields
(see \cite[Chapter~3]{AFP2000}) therefore yields
\[
    \int_{\Sigma_R}F\,\mathrm{d}[L_w\varphi]
    =
    -\int_{\Sigma_R}wT\cdot\nabla F\,\mathrm{d}x
    +
    \int_{\partial\Sigma_R}
        Fw\,\operatorname{Tr}T\cdot\eta_R\,\mathrm{d}\sigma,
\]
where $\eta_R$ is the outward unit normal to $\Sigma_R$.

Since $F$ vanishes in a neighborhood of $\partial B_R$,
the spherical boundary contributes nothing.
On $\partial\Sigma\cap B_R$, the normal $\eta_R$
agrees with $\eta$ almost everywhere. Hence
\[
    \int_\Sigma F\,\mathrm{d}[L_w\varphi]
    =
    -\int_\Sigma wT\cdot\nabla F\,\mathrm{d}x
    +
    \int_{\partial\Sigma}
        Fw\,\operatorname{Tr}T\cdot\eta\,\mathrm{d}\sigma.
\]

We next determine the sign of the boundary term.
Since $T$ takes values in the closed convex set
$\overline{\Sigma}$ almost everywhere, its interior
averages also belong to $\overline{\Sigma}$.
At almost every boundary point, these averages converge
to the BV trace. Thus
\[
    \operatorname{Tr}T(x)\in\overline{\Sigma}
    \qquad
    \mathcal{H}^{N-1}\text{-almost everywhere on }
    \partial\Sigma.
\]
At a boundary point $x$ where the outward normal exists,
convexity gives
\[
    (z-x)\cdot\eta(x)\leq0
    \qquad\text{for every }z\in\overline{\Sigma}.
\]
The cone property implies $x\cdot\eta(x)=0$.
Consequently,
\[
    z\cdot\eta(x)\leq0
    \qquad\text{for every }z\in\overline{\Sigma},
\]
and therefore
\[
    \operatorname{Tr}T\cdot\eta\leq0
    \qquad
    \mathcal{H}^{N-1}\text{-almost everywhere on }
    \partial\Sigma.
\]
Since $F$ and $w$ are nonnegative, this proves
\eqref{eq:greenformula}.

Finally, we verify the nonnegativity of the measures
in \eqref{eq:decomp-Lw}.
Set $h=w^{1/a}$. This function is positive, concave
and homogeneous of degree one in $\Sigma$.
It is therefore locally Lipschitz in $\Sigma$
and differentiable almost everywhere.
At each differentiability point $x\in\Sigma$,
the supporting-plane inequality and Euler's identity give
\begin{equation}\label{concavity}
    h(z)
    \leq
    h(x)+\nabla h(x)\cdot(z-x)
    =
    \nabla h(x)\cdot z,
    \qquad z\in\overline{\Sigma}.
\end{equation}
The inequality first holds for $z\in\Sigma$
and extends to $\overline{\Sigma}$ through continuity.

Since $h\geq0$ on $\overline{\Sigma}$, it follows that
\[
    \nabla h(x)\cdot z\geq0
    \qquad\text{for every }z\in\overline{\Sigma}.
\]
Moreover,
\[
    \nabla w(x)=a\,h(x)^{a-1}\nabla h(x)
\]
almost everywhere in $\Sigma$.
Thus, using $T(x)\in\overline{\Sigma}$, we obtain
\[
    \nabla w(x)\cdot T(x)\geq0
    \qquad\text{almost everywhere in }\Sigma.
\]
Together with $w\geq0$ and $\Delta_A\varphi\geq0$,
this proves that the absolutely continuous part in
\eqref{eq:decomp-Lw} is nonnegative.
The singular part is nonnegative because
$\operatorname{tr}(D_s^2\varphi)$ is a nonnegative measure
and $w$ is continuous and nonnegative.
\end{proof}

\begin{proposition}[Shifted formula]\label{prop:shift}
Under the preceding hypotheses, it holds
\begin{align}\label{eq:shiftformula}
 \int_\Sigma F\dd[L_w\varphi]
 \leq{}&-\int_\Sigma w(T-e)\cdot\nabla F\dd x
 -\int_{\partial\Sigma}Fw e\cdot\eta\dd\sigma
 +\int_\Sigma F e\cdot\nabla w\dd x.
\end{align}
In either of the two geometric regimes
considered above,
\begin{equation}\label{eq:basic}
 \int_\Sigma F\dd[L_w\varphi]
 +c\int_{\partial\Sigma}Fw\dd\sigma
 \leq-\int_\Sigma \nabla F \cdot (T-e)w\dd x,
\end{equation}
with $c=c_\Sigma$ or $c=c_{\Sigma,e}$, respectively.
\end{proposition}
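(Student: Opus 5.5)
The plan is to start from the inequality in Theorem~\ref{thm:greenformula}, namely
\[
\int_\Sigma F\,\mathrm{d}[L_w\varphi]\le -\int_\Sigma wT\cdot\nabla F\,\mathrm{d}x ,
\]
valid for nonnegative $F\in C_c^1(\mathbb{R}^N)$. We then split $T=(T-e)+e$, so that the right-hand side becomes
\[
-\int_\Sigma w(T-e)\cdot\nabla F\,\mathrm{d}x-\int_\Sigma w\,e\cdot\nabla F\,\mathrm{d}x .
\]
The second term is handled by the weighted Gauss--Green identity \eqref{eq:gauss-green-prelim}, which gives
\[
-\int_\Sigma w\,e\cdot\nabla F\,\mathrm{d}x=-\int_{\partial\Sigma}Fw\,e\cdot\eta\,\mathrm{d}\sigma+\int_\Sigma F\,e\cdot\nabla w\,\mathrm{d}x .
\]
Substituting this yields \eqref{eq:shiftformula} directly.

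The one point needing justification is that \eqref{eq:gauss-green-prelim} holds for $F\in C_c^1$ with the merely Sobolev weight $w$. We fix $R$ with $\operatorname{supp}F\subset B_R$ and apply the divergence theorem on the bounded Lipschitz convex domain $\Sigma_R$ to the $W^{1,1}$ vector field $Fw\,e$. Lemma~\ref{lem:weight} provides $w\in W^{1,1}(\Sigma_R)\cap L^\infty$ with $\operatorname{Tr}w=w|_{\partial\Sigma_R}$. The spherical part of $\partial\Sigma_R$ contributes nothing, since $F$ vanishes near $\partial B_R$. Finally, the product rule gives $\operatorname{div}(Fwe)=w\,e\cdot\nabla F+F\,e\cdot\nabla w$ almost everywhere.

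For \eqref{eq:basic} we treat the two regimes separately.

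\emph{Transversality regime.} By \eqref{eq:weight-sign-prelim} and $F\ge0$, the last term satisfies $\int_\Sigma F\,e\cdot\nabla w\,\mathrm{d}x\le0$, so it can be dropped. Since $e\cdot\eta\ge c_\Sigma$ for $\mathcal H^{N-1}$-a.e.\ point of $\partial\Sigma\setminus\{0\}$, and $F,w\ge0$, we get
\[
-\int_{\partial\Sigma}Fw\,e\cdot\eta\,\mathrm{d}\sigma\le -c_\Sigma\int_{\partial\Sigma}Fw\,\mathrm{d}\sigma .
\]
The point $\{0\}$ is $\mathcal H^{N-1}$-null, so excluding it is harmless. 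Moving this term to the left-hand side gives \eqref{eq:basic} with $c=c_\Sigma$.

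\emph{Transport-compatible regime.} Here $e\cdot\nabla w=0$ a.e., so the last term vanishes. Moreover $e\cdot\eta=c_{\Sigma,e}$ holds $w\,\mathrm{d}\sigma$-a.e., so the boundary term equals exactly $-c_{\Sigma,e}\int_{\partial\Sigma}Fw\,\mathrm{d}\sigma$. This is the exact identity \eqref{eq:exact-boundary-prelim}, and it gives \eqref{eq:basic} with $c=c_{\Sigma,e}$.

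One subtlety in this regime is that the hypotheses of Theorem~\ref{thm:greenformula} must still hold. They do, because that theorem only uses convexity of $\Sigma$, the homogeneity and concavity of $w$, and $T\in\overline\Sigma$; it does not use $-e\in D$.

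The main technical point is the weighted Gauss--Green step. Everything else is bookkeeping with signs and the hypotheses on $e\cdot\eta$ and $e\cdot\nabla w$.
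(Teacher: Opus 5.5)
Your proposal is correct and follows essentially the same route as the paper. Both apply the Gauss--Green formula to the field $Fw\,e$ and combine it with the inequality of Theorem~\ref{thm:greenformula} to get \eqref{eq:shiftformula}, then use the regime-specific facts about $e\cdot\nabla w$ and $e\cdot\eta$ to reach \eqref{eq:basic}. Your justification of the weighted Gauss--Green step on $\Sigma_R$ via Lemma~\ref{lem:weight}, which the paper leaves implicit, is a welcome addition.
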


\begin{proof}
The Gauss--Green formula applied to
the vector field $Fw e$ gives
\[
 \int_{\partial\Sigma}Fw e\cdot\eta\dd\sigma
 =\int_\Sigma w e\cdot\nabla F\dd x+
   \int_\Sigma F e\cdot\nabla w\dd x.
\]
Combining this equation with \eqref{eq:greenformula} yields
\[
\int_\Sigma F\dd[L_w\varphi] + \int_{\partial\Sigma}Fw e\cdot\eta\dd\sigma \le -\int_\Sigma wT\cdot\nabla F\dd x + \int_\Sigma we\cdot\nabla F\dd x + \int_\Sigma F e\cdot\nabla w\dd x,
\]
which is exactly the shifted formula~\eqref{eq:shiftformula}.

In the transverse regime, we have
\[
c_\Sigma
:=
\operatorname*{ess\,inf}_{x\in\partial\Sigma\setminus\{0\}}
e\cdot\eta(x)
>0,
\]
and equation~\eqref{concavity} with $z=-e$ yields

\[
 e\cdot\nabla w (x) \leq-a w(-e)^{1/a}w(x)^{1-1/a}<0.
\]
These conditions imply
\[
\int_\Sigma
F\,e\cdot\nabla w\,\mathrm{d}x
<0, \qquad c_\Sigma\int_{\partial\Sigma}Fw \dd\sigma \le \int_{\partial\Sigma}Fw e\cdot\eta\dd\sigma,
\]
for nonnegative $F$ that is not identically
zero in $\Sigma$.

In the compatible regime
\[
\int_\Sigma
F\,e\cdot\nabla w\,\mathrm{d}x
=0,\qquad c_{\Sigma,e}\int_{\partial\Sigma}Fw \dd\sigma = \int_{\partial\Sigma}Fw e\cdot\eta\dd\sigma.
\]
Here 
\[
c_{\Sigma,e} = e \cdot\eta(x), \qquad w\dd \sigma\text{-a.e. on } \partial \Sigma,
\]
and  $e \cdot \nabla w (x) = 0$ a.e. in $\Sigma$.
Inserting these conditions in equation~\eqref{eq:shiftformula} we obtain~\eqref{eq:basic}, for both geometric regimes.
\end{proof}

\begin{proposition}[Algebraic estimate for
the transport map]\label{prop:algebra-transport-map}
Assume the common hypotheses from Section~\ref{sec:main-results} and suppose that one of the two geometric regimes described there holds.
Let $F$ and $G$ be nonnegative measurable functions such that
\[
F^{1-\frac1d}\in C_c^1(\overline{\Sigma})
\qquad\text{and}\qquad
G^{1-\frac1d}\in L^1(\Sigma,w\,\mathrm{d}x),
\]
where $C^{1}_c(\overline \Sigma)$ denotes the restrictions to $\overline \Sigma$ of functions in $C^{1}_c(\R^N)$. In addition, assume that
\[
\mathrm{d}\mu(x)=F(x)w(x)\,\mathrm{d}x,
\qquad
\mathrm{d}\nu(y)=G(y)w(y)\,\mathrm{d}y
\]
are probability measures in $\mathbb{R}^N$, with $\nu$ compactly
supported.
Let $T:=\nabla \varphi_0$ be the map transporting $\mu$ onto $\nu$. Then,

\begin{equation}\label{eq:tran-ineq}
    d \int_{\Sigma} G^{1-1/d}\, w \dd x + c \int_{\partial \Sigma} F^{1-1/d}\, w \dd \sigma \leq -\int_{\Sigma} \nabla (F^{1-1/d}) \cdot (T-e)\, w \dd x
\end{equation}
with $c=c_\Sigma$ or $c=c_{\Sigma,e}$, depending on the geometric regime.
\end{proposition}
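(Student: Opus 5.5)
This is the standard Cordero-Erausquin--Nazaret--Villani chain, adapted to the weight. The plan has four steps: regularize the potential, use the Monge--Ampère relation with a weighted AM--GM inequality, integrate against $\mu$, and close with Proposition~\ref{prop:shift}.

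First, by Lemma~\ref{lem:global-lipschitz-regularity}, replace $\varphi_0$ by a convex, globally Lipschitz potential $\varphi$ with $\nabla\varphi=\nabla\varphi_0$ $\mu$-a.e. Since $\nu$ is concentrated on $\overline\Sigma\cap\overline B_R$, we have $T=\nabla\varphi\in\overline\Sigma$ a.e. This is where the Lipschitz bound on $\varphi$ is needed: $T\in\overline\Sigma$ must hold a.e. in $\Sigma$, not only $\mu$-a.e. The constructed $\varphi$ is a sup of affine functions with slopes in $K$, so its gradient lies in $K$ wherever it exists. Hence Theorem~\ref{thm:greenformula} and Proposition~\ref{prop:shift} apply to $\varphi$.

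Second, use the Jacobian relation \eqref{eq:jacobian} for the weighted densities. Transporting $Fw\,dx$ to $Gw\,dy$ gives
\[
F(x)w(x)=G(T(x))\,w(T(x))\det D_A^2\varphi(x)\qquad \mu\text{-a.e.}
\]
Hence on $\{F>0\}$,
\[
G(T)^{-1/d}=F^{-1/d}\Bigl(\tfrac{w(T)}{w}\Bigr)^{1/d}(\det D_A^2\varphi)^{1/d}.
\]
The key pointwise inequality is a weighted arithmetic--geometric mean estimate. Since $(w(T)/w)^{1/d}=\bigl((w(T)/w)^{1/a}\bigr)^{a/d}$, we have
\[
\Bigl(\tfrac{w(T)}{w}\Bigr)^{1/d}(\det D_A^2\varphi)^{1/d}\le \frac{N}{d}\,\frac{\Delta_A\varphi}{N}+\frac{a}{d}\Bigl(\frac{h(T)}{h(x)}\Bigr),\qquad h=w^{1/a}.
\]
This uses AM--GM on the $N$ eigenvalues together with the factor $h(T)/h$ carrying weight $a/d$. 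By the concavity inequality \eqref{concavity} and $\nabla w=a h^{a-1}\nabla h$, we get $a\,h(T)/h(x)\le \nabla w(x)\cdot T(x)/w(x)$. Therefore
\[
d\,G(T)^{-1/d}\le F^{-1/d}\Bigl(\Delta_A\varphi+\tfrac{\nabla w\cdot T}{w}\Bigr).
\]

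Third, integrate against $\mu=Fw\,dx$. By the push-forward relation,
\[
d\int_\Sigma G^{1-1/d}w\,dy=d\int G(T)^{-1/d}\,d\mu\le\int_\Sigma F^{1-1/d}\bigl(w\Delta_A\varphi+\nabla w\cdot T\bigr)dx.
\]
The right-hand side is bounded by $\int_\Sigma F^{1-1/d}\,d[L_w\varphi]$, because the singular part $w\operatorname{tr}(D^2_s\varphi)$ is nonnegative by Theorem~\ref{thm:greenformula}. Finally, apply \eqref{eq:basic} of Proposition~\ref{prop:shift} with test function $F^{1-1/d}\in C^1_c$ in place of $F$. This adds the boundary term $c\int_{\partial\Sigma}F^{1-1/d}w\,d\sigma$ on the left and produces exactly the right-hand side of \eqref{eq:tran-ineq}, in either regime.

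The main obstacle I expect is the pointwise AM--GM step: it must be justified only $\mu$-a.e. and must handle points where $w(T)=0$ (there the factor vanishes and the inequality is trivial) and where $D^2_A\varphi$ is degenerate. Integrability is a related technical point. The left side is finite since $G^{1-1/d}\in L^1(w)$, and $\int F^{1-1/d}\,d[L_w\varphi]$ is finite because $F^{1-1/d}$ has compact support and $L_w\varphi$ is a Radon measure, by the BV statement \eqref{eq:bv-wT}. Thus all manipulations are between finite quantities.
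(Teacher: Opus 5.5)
Your proposal is correct and follows essentially the same route as the paper. Both arguments use the same four ingredients:
\begin{itemize}
\item the Lipschitz modification $\varphi=(\varphi_0^*+I_K)^*$ with $K=\overline\Sigma\cap\overline B_R$, which gives $\nabla\varphi\in\overline\Sigma$ almost everywhere;
\item the Monge--Amp\`ere relation combined with the weighted arithmetic--geometric mean inequality (weights $a/d$ and $N/d$) and the concavity bound $a\,(w(T)/w)^{1/a}\le \nabla w\cdot T/w$;
\item discarding the nonnegative singular part of $L_w\varphi$;
\item closing with \eqref{eq:basic} applied to $F^{1-1/d}$.
\end{itemize}
The only minor difference is in justifying $\int_\Sigma G^{1-1/d}w\,\mathrm{d}y=\int G(T)^{-1/d}\,\mathrm{d}\mu$. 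The paper uses the truncations $\min\{G^{-1/d},k\}\mathbf{1}_{\{G>0\}}$ and monotone convergence. You apply the push-forward identity directly to the nonnegative Borel function $G^{-1/d}\mathbf{1}_{\{G>0\}}$, which is equally valid.
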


\begin{proof}
By McCann's theorem, for every $\nu$-integrable function $\Psi$
\begin{equation}\label{eq:push-fwd-st}
    \int_{\Sigma}
    \Psi(T(x))
    F(x)w(x)\, \mathrm{d} x
    =
    \int_{\Sigma}
    \Psi(y)G(y)w(y)\, \mathrm{d} y.
\end{equation}
Because $\nu$ is supported in $\overline{\Sigma}$ and $T_{\#}\mu=\nu$, we have
\[
    T(x) \in \overline\Sigma, \qquad \mu\, \text{-a.e. } x \in  \Sigma
\]
Since $\operatorname{supp}\nu$ is compact, choose $R>0$ such that
\[
\operatorname{supp}\nu\subset
K:=\overline{\Sigma}\cap\overline{B}_R.
\]
Let $I_K$
denote the indicator function of $K$. We define
\[
\varphi
:=
\bigl(\varphi_0^*+I_K\bigr)^*
=
\sup_{y\in K}
\{x\cdot y-\varphi_0^*(y)\}.
\]
Then $\varphi$ is convex and globally Lipschitz, and
\[
\partial\varphi(x)\subset K.
\]
Consequently,
\[
\nabla\varphi(x)\in\overline{\Sigma}
\qquad\text{for $\mathcal L^{N}$-a.e. }x\in\mathbb R^N.
\]
Moreover, $\varphi=\varphi_0$ $\mu$-a.e., and hence
$\nabla\varphi=\nabla\varphi_0$ $\mu$-a.e.
The corresponding Monge--Amp\`ere equation reads
\begin{equation}\label{eq:monge-amp-st}
    G(T(x))
    w(T(x))
    \det D_A^2\varphi(x)
    =
    F(x)w(x), \qquad \mu\, \text{-a.e. } x \in  \Sigma
\end{equation}
where $D_A^2\varphi$ denotes the
Alexandrov Hessian of $\varphi$.
Define
\[
\Psi_k(y) := \min\left\{G^{-1/d}(y),k\right\}\mathbf{1}_{\{G>0\}}.
\]
Applying \eqref{eq:push-fwd-st} to $\Psi_k$,
\[
\int_{\Sigma} \Psi_k\!\left(T(x)\right) F(x)w(x)\,\mathrm{d} x
=
\int_{\Sigma} \Psi_k(y)G(y)w(y)\,\mathrm{d} y.
\]
By monotone convergence, we obtain
\[
\int_{\Sigma}
    G^{1-\frac1d}(y)w(y)\,\dd y
    =
    \int_{\Sigma}
    G^{-\frac1d}(T(x))
    F(x)w(x)\,\dd x,
\]
where the right-hand side is understood on the set
\[ \lbrace G(T)>0\rbrace.\]
Combining this identity with the Monge--Amp\`ere relation~\eqref{eq:monge-amp-st} gives

\begin{align}
    \int_{\Sigma}
    G^{1-\frac1d}(y)w(y)\, \mathrm{d} y
    &=
    \int_{\Sigma}
    G^{-\frac1d}(T(x))
    F(x)w(x)\, \mathrm{d} x
    \notag\\
    &=
    \int_{\Sigma}
    F^{1-\frac1d}
    \left(
        \frac{w(T)}{w}
    \right)^{1/d}
    \bigl(\det D_A^2\varphi\bigr)^{1/d}
    w\, \mathrm{d} x.
    \label{eq:tran-step-0}
\end{align}

We now exploit the structural assumptions on the weight. Since $w$ is
positively homogeneous of degree $a>0$ and $w^{1/a}$ is concave in
$\Sigma$, it follows that

\begin{equation}\label{eq:weight-conc-st}
    a
    \left(
        \frac{w(z)}{w(x)}
    \right)^{1/a}
    \leq
    \frac{\nabla w(x)\cdot z}{w(x)}
\end{equation}
for almost every $x\in\Sigma$ and for every $z\in\Sigma$. Indeed, by Euler's identity applied to the one-homogeneous function $h=w^{1/a}$
\[
\nabla h(x) \cdot x = h(x) \implies  \nabla w(x) \cdot x = a\,  w(x)
\]
for almost every $x \in \Sigma$. Concavity of $h$ gives
\[
h(z) - h(x) \le \nabla h (x) \cdot (z-x) \implies w^{1/a}(z) \le \frac 1a w^{\frac 1a-1}(x) \nabla w (x) \cdot z
\]
for almost every $x \in \Sigma$ and every $z \in \Sigma$. Equation~\eqref{eq:weight-conc-st} follows from this.

On the other hand, since $D_A^2\varphi$ is nonnegative definite,
the arithmetic--geometric mean inequality gives
\[
    \bigl(\det D_A^2\varphi\bigr)^{1/N}
    \leq
    \frac{\Delta_A\varphi}{N}.
\]
Since $d=N+a$, the arithmetic--geometric mean inequality with weights $\frac ad$ and $\frac Nd$
followed by \eqref{eq:weight-conc-st}, yields
\begin{align*}
&
\left(
    \frac{w(T)}{w}
\right)^{1/d}
\bigl(\det D_A^2\varphi\bigr)^{1/d}
=
\left[
    \left(
        \frac{w(T)}{w}
    \right)^{1/a}
\right]^{a/d}
\left[
    \bigl(\det D_A^2\varphi\bigr)^{1/N}
\right]^{N/d}
\\
&\quad\leq
\frac{a}{d}
\left(
    \frac{w(T)}{w}
\right)^{1/a}
+
\frac{N}{d}
\bigl(\det D_A^2\varphi\bigr)^{1/N}
\\
&\quad\leq
\frac{1}{d}
\left(
    \frac{\nabla w}{w}\cdot T
    +
    \Delta_A\varphi
\right).
\end{align*}

Consequently,

\begin{equation}\label{eq:transport-step}
    d\int_{\Sigma} G^{1-\frac{1}{d}}\,w \dd x
    \leq
    \int_{\Sigma} F^{1-\frac{1}{d}}
    \left(
    \Delta_A\varphi+\frac{\nabla w}{w}\cdot\nabla\varphi
    \right)w\,\mathrm{d}x.
\end{equation}

In view of Theorem~\ref{thm:greenformula}, Proposition~\ref{prop:shift} and the fact that the singular part of the distributional Hessian of $\varphi$ is nonnegative ($D^2_s \varphi \ge 0$) as a matrix-valued measure, the right-hand side of equation~\eqref{eq:transport-step} is bounded above by
\[
\int_{\Sigma} F^{1-\frac{1}{d}}
    \dd [L_w\varphi]
\]
and therefore
\begin{equation}\label{eq:transport-setp-2}
    d\int_{\Sigma} G^{1-\frac{1}{d}}\,w \dd x \le \int_{\Sigma} F^{1-\frac{1}{d}}
    \dd [L_w\varphi].
\end{equation}
The shifted formula in~\eqref{eq:basic} yields
\[
\int_{\Sigma} F^{1-\frac{1}{d}}
    \dd [L_w\varphi] + c \int_{\partial \Sigma} F^{1-1/d}\, w \dd \sigma \leq -\int_{\Sigma} \nabla (F^{1-1/d}) \cdot (T-e)\, w \dd x.
\]
These last two assertions together with~\eqref{eq:transport-setp-2} conclude the proof.
\end{proof}

\begin{proposition}[Interior realization of the energy space]\label{prop:energy-space}
There exists a constant $C_I>0$ such that, for every $f\in C_c^\infty(\overline \Sigma)$,
\begin{equation}\label{eq:interior}
    \|f\|_{L^{p^*}(\Sigma,w)}
\le C_I\|\nabla f\|_{L^p(\Sigma,w)}.
\end{equation}
Consequently, the completion of $C_c^\infty(\overline \Sigma)$ with respect to the energy seminorm
\[
\|f\|_{\dot W^{1,p}}
:=
\|\nabla f\|_{L^p(\Sigma,w)}
\]
is continuously and injectively realized as a subspace of
\[
L^{p^*}(\Sigma,w)\cap W_{\mathrm{loc}}^{1,p}(\Sigma,w).
\]
More precisely, if $(f_j)$ is Cauchy with respect to $\|\cdot\|_{\dot W^{1,p}}$, then there exists a unique
\[
u\in L^{p^*}(\Sigma,w)\cap W_{\mathrm{loc}}^{1,p}(\Sigma,w)
\]
such that
\[
f_j\to u
\quad\text{in }L^{p^*}(\Sigma,w),
\qquad
\nabla f_j\to\nabla u
\quad\text{in }L^p(\Sigma,w).
\]
\end{proposition}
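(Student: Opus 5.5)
The first step is the weighted interior Sobolev inequality \eqref{eq:interior}. From it, the realization statement follows by a standard completion argument.

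For \eqref{eq:interior} I would cite the sharp weighted Sobolev inequality on convex cones of Cabr\'e, Ros-Oton and Serra \cite{CabreRosOtonSerra2016}. Its hypotheses are exactly ours: $w$ is continuous, nonnegative and homogeneous of degree $a>0$ on an open convex cone, $w^{1/a}$ is concave, and $1<p<d=N+a$. That theorem yields
\[
\Bigl(\int_\Sigma|f|^{p^*}w\dd x\Bigr)^{1/p^*}\le C_I\Bigl(\int_\Sigma|\nabla f|^pw\dd x\Bigr)^{1/p}
\]
for $f\in C_c^\infty(\overline\Sigma)$, with no boundary condition imposed on $\partial\Sigma$. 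If one prefers a self-contained argument, the transport scheme of Proposition~\ref{prop:algebra-transport-map} gives the same estimate. One uses the unshifted version (no $e$, no boundary term), since Theorem~\ref{thm:greenformula} already gives a nonpositive cone boundary contribution. This yields $d\int G^{1-1/d}w\le -\int\nabla(F^{1-1/d})\cdot T\,w$. One then applies H\"older, takes $F=|f|^{p^*}/\|f\|^{p^*}_{L^{p^*}(w)}$ and a fixed compactly supported target $G$, and the bound follows as in \cite{CorderoNazaretVillani2004}. To reach general $f$, I would reduce to $|f|$ and use a regularization so that $F^{1-1/d}$ is $C^1_c$. This regularization is routine but slightly delicate, so citing \cite{CabreRosOtonSerra2016} is preferable.

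Next, let $(f_j)$ be Cauchy in the energy seminorm. Applying \eqref{eq:interior} to $f_j-f_k$ shows that $(f_j)$ is Cauchy in $L^{p^*}(\Sigma,w)$, so $f_j\to u$ there. Also $\nabla f_j\to V$ in $L^p(\Sigma,w)^N$. On each compact $K\subset\Sigma$ we have $w\ge m_K>0$, so the convergence holds in unweighted $L^{p^*}(K)$ and $L^p(K)$. Passing to the limit in $\int f_j\partial_i\psi=-\int\partial_if_j\,\psi$ for $\psi\in C_c^\infty(\Sigma)$ gives $V=\nabla u$ weakly, so $u\in W^{1,p}_{\mathrm{loc}}(\Sigma,w)$. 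The limit depends only on the equivalence class of the Cauchy sequence, again by \eqref{eq:interior}, so the map is well defined, linear and continuous.

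The main point is injectivity. Suppose $u=0$; we must show that the class has zero energy norm, i.e.\ $V=0$. But $V=\nabla u$ in $\mathcal D'(\Sigma)$, so $u=0$ forces $V=0$ a.e. It is crucial that the $L^{p^*}$ limit is available: without \eqref{eq:interior}, a Cauchy sequence could converge to a nonzero constant, which is the usual obstruction in homogeneous spaces. Here the $L^{p^*}$ control, valid since $p<d$, rules this out, and uniqueness of $u$ follows from uniqueness of $L^{p^*}$ limits. I expect the real obstacle to be only the first step, namely checking that the cited weighted Sobolev inequality applies without any vanishing of $f$ on $\partial\Sigma$. This holds for the Cabr\'e--Ros-Oton--Serra inequality, which is stated for functions not vanishing on the boundary of the cone.
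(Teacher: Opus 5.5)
Your proposal is correct. The realization part matches the paper's argument: you obtain the $L^{p^*}(\Sigma,w)$ limit from \eqref{eq:interior} applied to $f_j-f_k$, identify the gradient limit by testing on compact subsets where $w$ is bounded below, and read injectivity off $V=\nabla u$.

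The genuine difference is how you obtain \eqref{eq:interior}. You cite the weighted Sobolev inequality of Cabr\'e, Ros-Oton and Serra \cite{CabreRosOtonSerra2016}. It applies under exactly these hypotheses to $C^1_c(\mathbb{R}^N)$ functions with no condition on $\partial\Sigma$, so this is a legitimate short proof.

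The paper instead gives a self-contained transport proof, which is precisely your fallback sketch:
\begin{itemize}
\item it fixes a nonnegative target $v_0\in C_c^\infty(\Sigma)$ with $\int_\Sigma v_0^{p^*}w\dd x=1$;
\item it combines \eqref{eq:transport-setp-2} with the unshifted formula \eqref{eq:greenformula}, which has a nonpositive cone boundary term and involves no $e$;
\item it applies H\"older and the push-forward identity to get $dV_0\le p_*M_0^{1/p'}\|\nabla u\|_{L^p(\Sigma,w)}$, which yields the explicit constant $C_I=p_*M_0^{1/p'}/(dV_0)$.
\end{itemize}
The regularization you call delicate is handled in one line with $f_\varepsilon=\sqrt{f^2+\varepsilon^2}-\varepsilon$. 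This function is smooth and compactly supported, satisfies $|\nabla f_\varepsilon|\le|\nabla f|$, and increases to $|f|$ as $\varepsilon\downarrow0$. Moreover $f_\varepsilon^{p_*}\in C^1_c$ because $p_*>1$, so monotone convergence finishes the signed case.

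Your citation buys brevity. The paper's route keeps the whole argument inside the transport machinery of Theorem~\ref{thm:greenformula} and Proposition~\ref{prop:algebra-transport-map}, gives an explicit (non-optimal) $C_I$, and visibly uses no boundary trace estimate. Neither route is circular with respect to the later trace results.
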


\begin{proof}

Fix a nonnegative $v_0\in C_c^\infty(\Sigma)$ with
$\int v_0^{p^*}w=1$. Define
\[
 V_0=\int_\Sigma v_0^{p_*}w\dd x>0,\qquad
 M_0=\int_\Sigma |y|^{p'}v_0^{p^*}w\dd y\in(0,\infty).
\]
Let $u \in C_c^{\infty}(\overline \Sigma)$ be a nonnegative function, such that
\[
\int_{\Sigma} u^{p^*}w\,\dd x = 1
\]
Set
\[
    F:=u^{p^*},
    \qquad
    G:=v_0^{p^*}.
\]
Note that $F^{1-1/d} = u^{p_*} \in C_c^{1}(\overline \Sigma)$, because $p_*>1$. Using equation~\eqref{eq:transport-setp-2} from the proof of Proposition~\ref{prop:algebra-transport-map} together with formula~\eqref{eq:greenformula} from Theorem~\ref{thm:greenformula},

\[
d\int_{\Sigma} G^{1-\frac{1}{d}}\,w \dd x \le \int_{\Sigma} F^{1-\frac{1}{d}}
    \dd [L_w\varphi] \le -\int_\Sigma wT\cdot\nabla (F^{1-\frac1d})\dd x.
\]
Note that
\[
G^{1-\frac1d} = v_0^{p_*}, \qquad F^{1-\frac1d} = u^{p_*}
\]
we obtain 
\[
 dV_0\le -p_*\int_\Sigma u^{p_*-1}\nabla u\cdot T\,w\dd x
 \le p_*M_0^{1/p'}\|\nabla u\|_{L^p(w)}
\]
using H\"older's inequality and the following push-forward identity
\[
    \int_{\Sigma}
    u^{p^*}
    |\nabla\varphi|^{p'}
    w\, \mathrm{d} x
    =
    \int_{\Sigma}
    v_0^{p^*}
    |x|^{p'}
    w\, \mathrm{d} x.
\]
Amplitude normalization gives
\begin{equation}\label{eq:interior-embed}
 \|f\|_{L^{p^*}(w)}\leq C_I\|\nabla f\|_{L^p(w)},\qquad
 C_I=\frac{p_*M_0^{1/p'}}{dV_0}.
\end{equation}
For signed $f$, apply the argument to
\[
f_{\varepsilon} = \sqrt{f^2+\varepsilon^2}-\varepsilon
\]
and let $\varepsilon\downarrow0$.
This function is smooth and compactly supported, its gradient is bounded
by $|\nabla f|$, and it converges to $|f|$. No boundary trace estimate
has been used. 
If $(f_j)$ is energy-Cauchy, \eqref{eq:interior-embed} gives a limit
$u\in L^{p^*}(\Sigma,w)$ and a gradient limit in $L^p(\Sigma,w)$. On every compact
interior set, $w$ is bounded above and below by positive constants.
Local unweighted convergence and distributional integration against
compactly supported tests identify that limit as $\nabla u$.
Consequently the completion is realized injectively as functions in
$L^{p^*}(\Sigma,w)\cap W^{1,p}_{\mathrm{loc}}(\Sigma,w)$. In particular,
\begin{equation}\label{energyapprox}
 f_j\to u\text{ in }L^{p^*}(\Sigma,w),\qquad
 \nabla f_j\to\nabla u\text{ in }L^p(\Sigma,w).
\end{equation}
The gradient seminorm is a norm on this realization by
\eqref{eq:interior-embed}. 
\end{proof}

The following auxiliary results will be used only in the proof of Theorem~\ref{thm:classification}, once we have established Theorems~\ref{thm:trace-continuous} and~\ref{thm:sharp-trace}.

\begin{proposition}[General sources and bounded-gradient potentials]
\label{prop:bounded-general}
Assume equation~\eqref{eq:interior}, the trace estimate~\eqref{eq:sharp-trace} in the compatible regime, and
transport compatibility. Let
\[
 u\in\dot W^{1,p}(\Sigma,w),
 \qquad u\geq0.
\]
Let $\varphi$ satisfy the hypotheses of Theorem~\ref{thm:greenformula}. Then
\begin{align}\label{bounded-general}
 &\int_\Sigma
 u^{p_*}
 \bigl(w\Delta_A\varphi+\nabla w\cdot\nabla\varphi\bigr)\dd x
 \notag\\
 &\quad+
 c_{\Sigma,e}
 \int_{\partial\Sigma}(\Tr u)^{p_*}w\dd\sigma
 \notag\\
 &\qquad\leq
 -p_*\int_\Sigma
 u^{p_*-1}\nabla u\cdot(\nabla\varphi-e)w\dd x.
\end{align}
\end{proposition}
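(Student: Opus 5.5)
Since the statement is an inequality that passes to the limit, the plan is to prove it first for nice functions, where it is the shifted Green formula, and then approximate. Take $f\in C_c^\infty(\overline\Sigma)$ with $f\ge0$, and set $F:=f^{p_*}$. Since $p_*>1$, we have $F\in C_c^1(\mathbb R^N)$ after extension, and $F\ge0$. Theorem~\ref{thm:greenformula} then applies to $F$ and $\varphi$. Combining it with Proposition~\ref{prop:shift} in the compatible regime, and dropping the nonnegative singular part $w\operatorname{tr}(D_s^2\varphi)$ from the left-hand side of \eqref{eq:basic}, gives \eqref{bounded-general} for $f$. Here we use $\nabla F=p_*f^{p_*-1}\nabla f$ and $\Tr F=f^{p_*}$ on $\partial\Sigma$.

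For general $u\ge0$ in $\dot W^{1,p}(\Sigma,w)$, pick $f_j\in C_c^\infty(\overline\Sigma)$ with $\|\nabla(f_j-u)\|_{L^p(w)}\to0$. We may take $f_j\ge0$ by replacing $f_j$ with the regularized absolute value $\sqrt{f_j^2+\varepsilon_j^2}-\varepsilon_j$, exactly as in Proposition~\ref{prop:energy-space}; the convergence survives because $|u|=u$. The terms then converge as follows.

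\begin{itemize}
\item[(i)] \emph{Right-hand side.} $T=\nabla\varphi$ is bounded with $|T|\le R$ by Lemma~\ref{lem:global-lipschitz-regularity}, so $|T-e|$ is bounded. By Proposition~\ref{prop:energy-space}, $f_j\to u$ in $L^{p^*}(w)$. Hence $f_j^{p_*-1}\to u^{p_*-1}$ in $L^{p^*/(p_*-1)}(w)$, using $(p_*-1)\,p^*/(p_*-1)=p^*$. Together with $\nabla f_j\to\nabla u$ in $L^p(w)$, Hölder's inequality with exponents $\frac{p_*-1}{p^*}+\frac1p=1$ gives convergence of the right-hand side. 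The exponent identity holds because $p_*-1=p^*(p-1)/p$.
\item[(ii)] \emph{Boundary term.} By the sharp trace inequality \eqref{eq:sharp-trace} applied to $f_j-u$, we have $\Tr f_j\to\Tr u$ in $L^{p_*}(w\,\mathrm d\sigma)$. Therefore $\int(\Tr f_j)^{p_*}w\,\mathrm d\sigma$ converges, with $\Tr u\ge0$.
\item[(iii)] \emph{Interior term on the left.} The density $g:=w\Delta_A\varphi+\nabla w\cdot\nabla\varphi$ is nonnegative by Theorem~\ref{thm:greenformula}, but it need not lie in any global $L^q$. I will therefore not claim convergence. After passing to a subsequence with $f_j\to u$ a.e., Fatou's lemma gives
\[
\int_\Sigma u^{p_*}g\,\mathrm dx\le\liminf_j\int_\Sigma f_j^{p_*}g\,\mathrm dx,
\]
and this direction is exactly what the inequality requires.
\end{itemize}

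Taking $\liminf$ in the smooth inequality then yields \eqref{bounded-general}, where the left-hand side may a priori be $+\infty$; the inequality itself shows it is finite.

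The main obstacle is justifying step (iii) and the initial smooth case when $F$ is supported up to the boundary. Everything hinges on $g\ge0$, which lets Fatou replace convergence. If one prefers to keep the singular part, the lower semicontinuity argument is the same, since it too is a nonnegative measure and $f_j^{p_*}\to u^{p_*}$ pointwise on its support only after the Hölder continuity of $u$ is known. This is why I drop that term at the smooth level.
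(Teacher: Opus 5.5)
Your proof is correct and follows the paper's route: apply the shifted Green formula \eqref{eq:basic} to $p_*$-th powers of nonnegative smooth approximants and drop the nonnegative singular part. You then pass to the limit using Fatou for the interior term, continuity of the trace for the boundary term, and the $L^{p'}\times L^p$ H\"older pairing for the right-hand side. The only difference is that you omit the paper's cutoff $\chi_R$ and the subsequent limit $R\to\infty$, which is legitimate because your approximants are already compactly supported.
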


\begin{proof}
Assuming the interior and trace estimates, there exists a sequence of functions $(u_j)_{j \in \mathbb{N}}$ in $C^{\infty}_c(\overline \Sigma)$, $u_j \ge 0$, such that
\begin{equation}\label{eq:approximation}
\begin{gathered}
 u_j\to u
 \quad\text{in }L^{p^*}(\Sigma,w),\\
 \nabla u_j\to\nabla u
 \quad\text{in }L^p(\Sigma,w),\\
 \Tr u_j\to\Tr u
 \quad\text{in }L^{p_*}(\partial\Sigma,w\dd\sigma).
\end{gathered}
\end{equation}
Because $u\geq0$, these approximations can be chosen nonnegative and $C^1$, by using the positive-part operation and smooth approximations to it. 
Choose a radial, nonincreasing function
$\chi\in C_c^\infty(\R^N)$ such that
\[
0\leq \chi\leq 1,\qquad
\chi\equiv 1 \text{ on } B_1,\qquad
\chi\equiv 0 \text{ on } \R^N\setminus B_2.
\]
For $R>0$, define
\[
\chi_R(x):=\chi\left(\frac{x}{R}\right).
\]
Then
\[
\chi_R=1 \text{ on } B_R,\qquad
\chi_R=0 \text{ on } \R^N\setminus B_{2R},
\qquad
|\nabla\chi_R|\leq \frac{C}{R}.
\]
Moreover, since $\chi$ is radially nonincreasing,
\[
\chi_R(x)\leq \chi_{R'}(x)
\qquad\text{whenever }R\leq R',
\]
and hence
\[
\chi_R(x)\uparrow 1
\qquad\text{as }R\to\infty
\]
for every $x\in\R^N$.
Applying formula~\eqref{eq:basic} 
\[
\int_\Sigma F\dd[L_w\varphi]
 +c_{\Sigma,e}\int_{\partial\Sigma}Fw\dd\sigma
 \leq-\int_\Sigma \nabla F \cdot (\nabla \varphi-e)w\dd x
\]
for $F = \chi_R u_j^{p_*} \in C^{1}_c(\R^N)$ and some fixed $R>0$ implies
\begin{align*}\label{}
 &\int_\Sigma
 \chi_Ru_j^{p_*}
 \dd[L_w\varphi]
+
 c_{\Sigma,e}
 \int_{\partial\Sigma}\chi_R(\Tr u_j)^{p_*}w\dd\sigma\\
 &\leq
 -p_*\int_\Sigma
\chi_R u_j^{p_*-1}\nabla u_j\cdot(\nabla\varphi-e)w\dd x - \int_{E_R}
 u_j^{p_*}\nabla \chi_R\cdot(\nabla\varphi-e)w\dd x,
\end{align*}
where $E_R:=\Sigma \cap(B_{2R} \setminus B_R)$. Since the singular part of the distributional Hessian of $\varphi$ is nonnegative ($D^2_s \varphi \ge 0$) as a matrix-valued measure, we obtain
\begin{align*}\label{}
 &\int_\Sigma
 \chi_Ru_j^{p_*}
 (w \Delta_A \varphi+\nabla w \cdot \nabla \varphi) \dd x
+
 c_{\Sigma,e}
 \int_{\partial\Sigma}\chi_R(\Tr u_j)^{p_*}w\dd\sigma\\
 &\leq
 -p_*\int_\Sigma
\chi_R u_j^{p_*-1}\nabla u_j\cdot(\nabla\varphi-e)w\dd x - \int_{E_R}
 u_j^{p_*}\nabla \chi_R\cdot(\nabla\varphi-e)w\dd x.
\end{align*}

Now let $j \to \infty$. On the left-hand side, Fatou's lemma resolves the first integral while the second integral follows from $|\chi_R| \le 1$ and the convergence in $L^{p_*}(\partial\Sigma,w\dd \sigma)$. On the right-hand side, the first integral converges because $(p_*-1)p'=p^*$,~\eqref{eq:approximation} implies $u_j^{p_*-1} \to u^{p_*-1} \text{ in } L^{p'}(\Sigma,w)$, $\nabla u_j \to \nabla u \text{ in } L^{p}(\Sigma,w)$, and $\nabla \varphi \in L^{\infty}$. The remaining integral converges by local $L^{1}(\Sigma,w)$ convergence of $u^{p_*}_j$. Therefore,
\begin{align*}\label{}
 &\int_\Sigma
 \chi_Ru^{p_*}
 (w \Delta_A \varphi+\nabla w \cdot \nabla \varphi) \dd x
+
 c_{\Sigma,e}
 \int_{\partial\Sigma}\chi_R(\Tr u)^{p_*}w\dd\sigma\\
 &\leq
 -p_*\int_\Sigma
\chi_R u^{p_*-1}\nabla u\cdot(\nabla\varphi-e)w\dd x - \int_{E_R}
 u^{p_*}\nabla \chi_R\cdot(\nabla\varphi-e)w\dd x.
\end{align*}

Finally, let $R \to \infty$. On the left-hand side, the monotone convergence theorem resolves the first integral, while the second integral follows from the dominated convergence theorem since we have
\[
\chi_R (\Tr u)^{p_*} w \to (\Tr u)^{p_*}w \text{ a.e., and} \qquad  (\Tr u)^{p_*} \in L^1(\partial \Sigma,w \dd \sigma).
\]
On the right-hand side, the first integral follows from H\"older's inequality and the dominated convergence theorem. Homogeneity gives
\[
\int_{E_R} w \dd x \le C R^{d},
\]
hence by boundedness of $\nabla \varphi$,
\[
\left |\int_{E_R}
 u^{p_*}\nabla \chi_R\cdot(\nabla\varphi-e)w\dd x \right |\le \frac CR \int_{E_R} u^{p_*} w \dd x \le \frac CR \left (\int_{E_R} u^{p^*} w \dd x \right )^{p_*/p^{*}} \left ( \int_{E_R} w \dd x \right )^{1-p_*/p^{*}}.
\]
The last term is bounded above by
\[
C \left (\int_{E_R} u^{p^*} w\dd x \right )^{p_*/p^{*}} \leq C \left (\int_{\Sigma \setminus B_R} u^{p^*} w\dd x \right )^{p_*/p^{*}}\to 0. \qedhere
\]

\end{proof}

\begin{lemma}[Lipschitz truncation of the transport potential]\label{lem:trucation-potential}
Let $\mu$ and $\nu$ be two probability measures on $\mathds{R}^{N}$, with support on $\overline \Sigma$, $\mu$ absolutely continuous with respect to Lebesgue measure and $\nu$ not necessarily compactly supported.
Let $\varphi_0:\mathds{R}^{N}\to \mathds{R} \cup \{+\infty\}$ be the convex transport potential from $\mu$ onto $\nu$. Define
\[
\varphi=(\varphi_0^*+I_{\overline \Sigma})^*,
\]
where $I_A$ denotes the indicator function of a set $A$.
For $k\in\mathbb{N}$, define
\[
\varphi_k=(\varphi^*+I_{\overline \Sigma\cap \overline B_k})^*.
\]
Then there exists $k_0 \in \mathds{N}$ such that for every $k \ge k_0$, $\varphi_k$ is finite and globally $k$-Lipschitz. Define $T_k:=\nabla\varphi_k$. If
\[
T=\nabla\varphi,
\]
then
\[
T_k\to T
\quad\text{a.e. in }\Omega,\qquad \Omega=\operatorname{int}(\operatorname{dom}\varphi).
\]
Moreover, for every compact set $K\subset\Omega$, there exists
$k_1=k_1(K) \ge k_0$ such that, for all $k\geq k_1$,
\[
\varphi_k=\varphi
\quad\text{in a neighborhood of }K.
\]
\end{lemma}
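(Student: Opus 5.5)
The plan is to work entirely with Legendre transforms. The basic observation is that restricting the conjugate to a compact set produces a globally Lipschitz function, and that the restriction does not change the function near points where the subdifferential already lies inside that compact set.

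\textbf{Step 1: the functions $\varphi$ and $\varphi_k$.}
First I would note that $\varphi^*=(\varphi_0^*+I_{\overline\Sigma})^{**}$. This function is convex and lower semicontinuous. It is proper, because $\varphi_0^*$ is finite at $\nabla\varphi_0(x_0)\in\overline\Sigma$ for some differentiability point $x_0$ in the support of $\mu$. Moreover $\varphi^*\le\varphi_0^*$ on $\overline\Sigma$, and $\varphi^*=+\infty$ outside $\overline\Sigma$.

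Next I would choose $k_0$ so that $\overline\Sigma\cap\overline B_{k_0}$ contains such a point $z_0=\nabla\varphi_0(x_0)$ with $\varphi^*(z_0)<\infty$. For $k\ge k_0$ the argument of Lemma~\ref{lem:global-lipschitz-regularity} then applies verbatim:
\[
\varphi_k(x)=\sup_{z\in\overline\Sigma\cap\overline B_k}\{x\cdot z-\varphi^*(z)\}.
\]
This is bounded below by $x\cdot z_0-\varphi^*(z_0)$ and above by $k|x-x_0|+C$. It is therefore finite and $k$-Lipschitz, since each affine function in the supremum is $k$-Lipschitz.

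\textbf{Step 2: local agreement, the key point.}
Let $K\subset\Omega$ be compact. Since $\varphi$ is convex and finite on the open set $\Omega$, it is locally Lipschitz there. Hence $\partial\varphi$ is bounded on a compact neighborhood $K'$ of $K$ with $K'\subset\Omega$; say $\partial\varphi(K')\subset\overline B_{k_1}$. Also $\partial\varphi(x)\subset\operatorname{dom}\varphi^*\subset\overline\Sigma$.

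For $x\in K'$ pick any $z\in\partial\varphi(x)$. Fenchel's equality gives $\varphi(x)=x\cdot z-\varphi^*(z)$, and this $z$ is admissible in the supremum defining $\varphi_k$ for $k\ge k_1$. Therefore $\varphi_k(x)\ge\varphi(x)$. The reverse inequality $\varphi_k\le\varphi^{**}=\varphi$ is immediate because the supremum is taken over a smaller set. So $\varphi_k=\varphi$ on $K'$, which is a neighborhood of $K$.

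The only subtle point I expect is justifying that subgradients exist and are bounded on $K'$. This is the standard local boundedness of $\partial\varphi$ in the interior of the domain, and it is where the restriction to $\Omega$ is needed.

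\textbf{Step 3: a.e.\ convergence.}
I would exhaust $\Omega$ by compact sets $K_m$. On the interior of $K_m$ we have $\varphi_k=\varphi$ for $k\ge k_1(K_m)$, so $\nabla\varphi_k=\nabla\varphi$ at every differentiability point of $\varphi$ there. Since $\varphi$ is differentiable a.e.\ in $\Omega$, it follows that $T_k\to T$ a.e.\ in $\Omega$; indeed the sequence is eventually constant at almost every point.
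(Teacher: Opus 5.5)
Your proposal is correct and follows essentially the same route as the paper. The ingredients match: the same dual truncation, the finiteness and $k$-Lipschitz bound carried over from Lemma~\ref{lem:global-lipschitz-regularity}, and local agreement via local boundedness of $\partial\varphi$ on $\Omega$ together with Fenchel's equality. The only difference is in the a.e.\ convergence: you deduce it from local agreement on a compact exhaustion, while the paper argues pointwise at differentiability points (if $|\nabla\varphi(x)|\le k$ then $\nabla\varphi_k(x)=\nabla\varphi(x)$, otherwise $|\nabla\varphi_k(x)|\le k<|\nabla\varphi(x)|$), which additionally gives the domination $|\nabla\varphi_k|\le|\nabla\varphi|$ that is later used for dominated convergence in Theorem~\ref{thm:general}.
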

\begin{proof}
As in the proof of Lemma~\ref{lem:global-lipschitz-regularity}, the definition
\[
\varphi=(\varphi_0^*+I_{\overline \Sigma})^*,
\]
leaves the transport gradient
unchanged at $\mu$-almost every point. Moreover,
\[
 \operatorname{dom}\varphi^*
 \subset\overline\Sigma, \qquad \operatorname{dom}\varphi^*\neq \emptyset.
\]
Then, there exists a $k_0 \in \mathbb{N}$ such that
\[
(\overline\Sigma \cap \overline B_{k_0}) \cap \operatorname{dom}\varphi^* \neq \emptyset.
\]
Note that $\Omega$ is nonempty and carries the source measure: a convex
domain carrying an absolutely continuous probability measure is
full-dimensional, and its boundary has Lebesgue measure zero.

For $k \ge k_0$, define
\begin{equation}\label{dual-truncation}
 \varphi_k(x)
 :=
 \sup_{y\in\overline\Sigma\cap\overline B_k}
 \{x\cdot y-\varphi^*(y)\}.
\end{equation}
These potentials are finite, convex and globally $k$-Lipschitz.
Their gradients satisfy
\[
 \nabla\varphi_k(x)\in\overline\Sigma
 \quad\text{for almost every }x\in\R^N.
\]
For every compact $K\subset \Omega$, local boundedness of the subgradients of
$\varphi$ gives
\begin{equation}\label{local-equality}
 \varphi_k=\varphi
 \quad\text{on a neighborhood of }K
 \quad\text{for sufficiently large }k.
\end{equation}
Indeed, each supporting slope there is bounded, belongs to
$\operatorname{dom}\varphi^*$, and is admitted in~\eqref{dual-truncation}. Fenchel's equality proves the assertion as in the proof of Lemma~\ref{lem:global-lipschitz-regularity}.
At a differentiability point of $\varphi$, if
$|\nabla\varphi(x)|\leq k$, then $\nabla\varphi_k(x)=\nabla\varphi(x)$. Otherwise, $|\nabla\varphi_k(x)|\leq k<|\nabla\varphi(x)|.$
In any case
\begin{equation}\label{domination}
\begin{gathered}
 |\nabla\varphi_k(x)|
 \leq|\nabla\varphi(x)|,\\
 \nabla\varphi_k(x)\to\nabla\varphi(x)
 \quad\text{for almost every }x\in \Omega. 
\end{gathered}
\qedhere
\end{equation}

\end{proof}

\begin{theorem}[]
\label{thm:general}
Assume equation~\eqref{eq:interior}, the trace estimate~\eqref{eq:sharp-trace} in the compatible regime, and
transport compatibility. Let
\[
 u,v\in\dot W^{1,p}(\Sigma,w),
 \qquad u,v\geq0,
\]
satisfy
\[
 \|u\|_{L^{p^*}(\Sigma,w)}
 =
 \|v\|_{L^{p^*}(\Sigma,w)}
 =
 1
\]
and
\[
 \int_\Sigma |y-e|^{p'}v^{p^*}(y)w(y)\dd y<\infty.
\]
For the potential chosen as in Lemma~\ref{lem:trucation-potential},
\begin{align}\label{general}
 &\int_{\Omega\cap\Sigma}
 u^{p_*}
 \bigl(w\Delta_A\varphi+\nabla w\cdot\nabla\varphi\bigr)\dd x
 \notag\\
 &\quad+
 c_{\Sigma,e}
 \int_{\partial\Sigma}(\Tr u)^{p_*}w\dd\sigma
 \notag\\
 &\qquad\leq
 -p_*\int_\Sigma
 u^{p_*-1}\nabla u\cdot(\nabla\varphi-e)w\dd x.
\end{align}
Products involving $\nabla\varphi$ outside $\Omega$ are defined to be
zero where $u=0$. Indeed
\[
u = 0 \text{ a.e. on } \Sigma \setminus\Omega,
\]
because the source measure $u^{p^*}w\dd x$ is concentrated on $\Omega$ and $w>0$ in $\Sigma$.
Consequently,
\begin{align}\label{lemma22}
 d\int_\Sigma v^{p_*}w\dd x
 \leq{}&
 -p_*\int_\Sigma
 u^{p_*-1}\nabla u\cdot(\nabla\varphi-e)w\dd x
 \notag\\
 &-
 c_{\Sigma,e}
 \int_{\partial\Sigma}(\Tr u)^{p_*}w\dd\sigma.
\end{align}
\end{theorem}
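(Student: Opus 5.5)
The plan is to reduce to Proposition~\ref{prop:bounded-general} through the Lipschitz truncations of Lemma~\ref{lem:trucation-potential}, pass to the limit in $k$, and then combine with the Monge--Amp\`ere/AM--GM chain from the proof of Proposition~\ref{prop:algebra-transport-map}.

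\emph{Step 1: inequality for the truncated potentials.} Let $\varphi_k$ and $T_k=\nabla\varphi_k$ be as in Lemma~\ref{lem:trucation-potential}. For $k\ge k_0$, $\varphi_k$ is finite, convex and globally $k$-Lipschitz, with $T_k\in\overline\Sigma$ a.e. Hence it satisfies the hypotheses of Theorem~\ref{thm:greenformula}. Proposition~\ref{prop:bounded-general} then gives \eqref{bounded-general} with $\varphi_k$ in place of $\varphi$.

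\emph{Step 2: passage $k\to\infty$.} The left-hand integrand $u^{p_*}(w\Delta_A\varphi_k+\nabla w\cdot T_k)$ is nonnegative. Fix a compact $K\subset\Omega\cap\Sigma$. By Lemma~\ref{lem:trucation-potential}, $\varphi_k=\varphi$ near $K$ for large $k$, so the Alexandrov Hessians agree a.e. on $K$. Exhausting $\Omega\cap\Sigma$ by compacts, Fatou's lemma (equivalently, monotone exhaustion) gives the $\Omega\cap\Sigma$ integral in \eqref{general} as a lower bound for the $\liminf$. The boundary term does not depend on $k$.

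On the right-hand side we need
\[
\int_\Sigma u^{p_*-1}\nabla u\cdot T_k\,w\dd x\to\int_\Sigma u^{p_*-1}\nabla u\cdot T\,w\dd x .
\]
We have $T_k\to T$ a.e. on $\Omega$ and $|T_k|\le|T|$. Outside $\Omega$ the integrand vanishes because $u=0$ a.e. there, and $\nabla u=0$ a.e. on $\{u=0\}$. This leaves a dominated convergence argument with majorant $u^{p_*-1}|\nabla u|\,|T|\,w$. By H\"older with $(p_*-1)p'=p^*$, this majorant is integrable provided
\[
\int_\Sigma u^{p^*}|T|^{p'}w\dd x<\infty .
\]
By the push-forward relation this quantity equals $\int v^{p^*}|y|^{p'}w\dd y$, which is finite by the moment assumption, since $|y|\le|y-e|+|e|$. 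This yields \eqref{general}.

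\emph{Step 3: deriving \eqref{lemma22}.} Take $F=u^{p^*}$ and $G=v^{p^*}$. The Monge--Amp\`ere equation for $\varphi$ holds $\mu$-a.e. on $\Omega$, since $\varphi$ has the same gradient as $\varphi_0$ $\mu$-a.e. The computation leading to \eqref{eq:transport-step} uses only the push-forward identity, the Jacobian relation \eqref{eq:jacobian}, the weighted AM--GM inequality and \eqref{eq:weight-conc-st}; it does not use compact support. It gives
\[
d\int_\Sigma v^{p_*}w\dd x\le\int_{\Omega\cap\Sigma}u^{p_*}\bigl(w\Delta_A\varphi+\nabla w\cdot\nabla\varphi\bigr)\dd x .
\]
Inserting this into \eqref{general} and moving the boundary term to the right gives \eqref{lemma22}. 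The monotone truncation $\Psi_k$ step handles the possibility that $G$ vanishes.

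\emph{Main obstacle.} The delicate point is Step~2. The left-hand side involves Alexandrov Hessians, which are only locally stable under the truncation, so the argument needs the local identity $\varphi_k=\varphi$ near compacts rather than mere a.e. convergence of gradients. The right-hand side needs the integrable majorant, which must be produced from the moment condition via the push-forward. One must also check that $\Omega$ carries all of $\{u>0\}$ up to null sets, which is exactly the stated concentration of the source measure.
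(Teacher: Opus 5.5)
Your proposal is correct and follows the paper's proof essentially step by step. You apply Proposition~\ref{prop:bounded-general} to the truncations $\varphi_k$, use Fatou's lemma on the left via the local identity $\varphi_k=\varphi$ near compacts of $\Omega$, use dominated convergence on the right with the push-forward identity and the moment hypothesis, and finish with the Monge--Amp\`ere/AM--GM chain. The only difference is the majorant on the right: you split off $e$ and use $|T_k|\le|T|$ together with $|y|\le|y-e|+|e|$ and $\|v\|_{L^{p^*}(\Sigma,w)}=1$, while the paper uses $|\nabla\varphi_k-e|\le(1+2|e|/\delta)|\nabla\varphi-e|$ with $\delta=\operatorname{dist}(e,\overline\Sigma)>0$; both give an integrable majorant.
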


\begin{proof}
Applying Proposition~\ref{prop:bounded-general} to $\varphi_k$, we obtain
\begin{align*}
 &\int_\Sigma
 u^{p_*}
 \bigl(w\Delta_A\varphi_k+\nabla w\cdot\nabla\varphi_k\bigr)\dd x
+
 c_{\Sigma,e}
 \int_{\partial\Sigma}(\Tr u)^{p_*}w\dd\sigma
 \notag\\
 &\leq
 -p_*\int_\Sigma
 u^{p_*-1}\nabla u\cdot(\nabla\varphi_k-e)w\dd x.
\end{align*}
The densities on the left are nonnegative. By \eqref{local-equality}, their absolutely continuous parts
eventually agree with the corresponding density for $\varphi$
at almost every point of $\Omega\cap\Sigma$. Fatou's lemma therefore
gives the left side of \eqref{general} in the limit.

The push-forward identity gives
\[
 \int_\Sigma
 u^{p^*}|\nabla\varphi-e|^{p'}w\dd x
 =
 \int_\Sigma
 v^{p^*}|y-e|^{p'}w\dd y
 <
 \infty.
\]
Set
\[
 \delta:=\operatorname{dist}(e,\overline\Sigma)>0.
\]
In the source set, \eqref{domination} implies
\[
 \begin{split}
 |\nabla\varphi_k-e|
 &\leq|\nabla\varphi|+|e|\\
 &\leq|\nabla\varphi-e|+2|e|\\
 &\leq
 \left(1+\frac{2|e|}{\delta}\right)
 |\nabla\varphi-e|.
 \end{split}
\]
Since $(p_*-1)p'=p^*$, dominated convergence yields
\[
 u^{p_*-1}(\nabla\varphi_k-e)
 \longrightarrow
 u^{p_*-1}(\nabla\varphi-e)
 \quad\text{in }L^{p'}(\Sigma,w).
\]
Pairing with $\nabla u\in L^p(\Sigma,w)$ proves the convergence of
the right-hand side.

The Monge--Amp\`ere and weighted arithmetic--geometric mean
calculation gives
\[
 d\int_\Sigma v^{p_*}w\dd x
 \leq
 \int_{\Omega\cap\Sigma}
 u^{p_*}
 \bigl(w\Delta_A\varphi+\nabla w\cdot\nabla\varphi\bigr)\dd x.
\]
Combining this with~\eqref{general} proves~\eqref{lemma22}.

For completeness, the integral of $v^{p_*}$ is finite.
Write
\[
 v^{p_*}
 =
 \bigl(v^{p^*}|y-e|^{p'}\bigr)^{p_*/p^*}
 |y-e|^{-p'p_*/p^*}
\]
and apply H\"older's inequality. Since
\[
 \frac{p_*}{p^*-p_*}=d-1,
\]
the remaining factor is controlled by
\[
 \int_\Sigma
 |y-e|^{-(d-1)p'}w(y)\dd y<\infty.
\]
The positive distance from $e$ to the cone excludes a finite-point
singularity. Homogeneity gives convergence at infinity because
\[
 (d-1)p'>d
 \quad\Longleftrightarrow\quad
 p<d. \qedhere
\]
\end{proof}

\section{Proofs of the main results}
\label{sec:proofs}


\subsection{Proof of Theorem~\ref{thm:trace-continuous}}
We first prove the estimate for
$u\in C^\infty_c(\overline\Sigma)$. Since
\[
    |\nabla |u||=|\nabla u|
    \qquad\text{a.e. in }\Sigma,
\]
and the trace norm depends only on $|u|$, a standard approximation argument
allows us to restrict attention to nonnegative functions.

Recall that
\[
    d=N+a,
    \qquad
    p^*=\frac{pd}{d-p},
    \qquad
    p_*=\frac{p(d-1)}{d-p},
    \qquad
    p'=\frac{p}{p-1}.
\]

We shall repeatedly use the identities
\begin{equation}\label{eq:exp-identities}
    p^*\left(1-\frac1d\right)=p_*,
    \qquad
    (p_*-1)p'=p^*.
\end{equation}
Let $u,v\in C^\infty_c(\overline\Sigma)$ be nonnegative functions with
\begin{equation}\label{eq:normalization-st}
    \|u\|_{L^{p^*}(\Sigma,w)}
    =\|v\|_{L^{p^*}(\Sigma,w)}=1,\qquad \|u\|_{L^{p^*}(\Sigma,w)} := \left ( \int_{\Sigma}
    u^{p^*}(x)w(x)\, \mathrm{d} x \right)^{1/p^*}.
\end{equation}

Set
\[
    F:=u^{p^*},
    \qquad
    G:=v^{p^*}.
\]
Note that $F^{1-1/d} = u^{p_*} \in C_c^{1}(\overline\Sigma)$, because $p_*>1$. Applying Proposition~\ref{prop:algebra-transport-map} and using identities~\eqref{eq:exp-identities},
\begin{align}
&
c_\Sigma
\int_{\partial\Sigma}
u^{p_*}w\, \mathrm{d}\sigma
+
d
\int_{\Sigma}
v^{p_*}w\, \mathrm{d} x
\notag\\
&\qquad\leq
-p_*
\int_{\Sigma}
u^{p_*-1}
\nabla u\cdot(\nabla\varphi-e)
w\, \mathrm{d} x.
\label{THIRD-MASTER-ST}
\end{align}
H\"older's inequality and the identity
$(p_*-1)p'=p^*$ now give
\begin{align}
&
c_\Sigma
\|u\|_{L^{p_*}(\partial\Sigma,w)}^{p_*}
+
d
\|v\|_{L^{p_*}(\Sigma,w)}^{p_*}
\notag\\
&\qquad\leq
p_*
\|\nabla u\|_{L^p(\Sigma,w)}
\left(
    \int_{\Sigma}
    u^{p^*}
    |\nabla\varphi-e|^{p'}
    w\, \mathrm{d} x
\right)^{1/p'}.
\label{eq:holder-st}
\end{align}

By the push-forward identity,
\[
    \int_{\Sigma}
    u^{p^*}
    |\nabla\varphi-e|^{p'}
    w\, \mathrm{d} x
    =
    \int_{\Sigma}
    v^{p^*}
    |x-e|^{p'}
    w\, \mathrm{d} x.
\]
We therefore obtain the fundamental estimate
\begin{align}
&
c_\Sigma
\|u\|_{L^{p_*}(\partial\Sigma,w)}^{p_*}
+
d
\|v\|_{L^{p_*}(\Sigma,w)}^{p_*}
\notag\\
&\qquad\leq
p_*
\|\nabla u\|_{L^p(\Sigma,w)}
\left(
    \int_{\Sigma}
    v^{p^*}|x-e|^{p'}w\, \mathrm{d} x
\right)^{1/p'}.
\label{eq:fundamental-st}
\end{align}
We now choose a suitable target density. Set
\[
    \kappa:=\frac{d-p}{p-1}, \qquad G_0(x):=|x-e|^{-\kappa}.
\]
We first verify the required integrability. Let
\[
    Z
    :=
    \int_{\Sigma}
    G_0^{p^*}w\, \mathrm{d} x
\]
and
\begin{equation}\label{eq:def-I-st}
    I
    :=
    \int_{\Sigma}
    |x-e|^{-(d-1)p'}w(x)\, \mathrm{d} x.
\end{equation}
Since $\Sigma\subset\{x_N>0\}$ while
$e=(0,\ldots,0,-1)$, the point $e$ lies outside
$\overline\Sigma$; hence neither integral has a singularity at a finite
point of $\overline\Sigma$. Moreover, homogeneity and continuity of $w$
on the spherical section imply 
\[
    w(x)\leq C|x|^a,
\]
for every $x \in \Sigma$. For $|x|\geq 2|e|$,
\[
    |x-e|\geq\frac{|x|}{2},
\]
and therefore, 
\begin{align*}
    I
    &\leq
    C
    \int_2^\infty
    r^{N-1+a-(d-1)p'}\, \mathrm{d} r
    \\
    &=
    C
    \int_2^\infty
    r^{d-1-(d-1)p'}\, \mathrm{d} r
    \\
    &=
    C
    \int_2^\infty
    r^{-\frac{d-1}{p-1}}\, \mathrm{d} r.
\end{align*}
Since $p<d$, the last integral converges. Thus
\[
    I<\infty.
\]
Similarly, since
\[
    G_0^{p^*}=|x-e|^{-dp'},
\]
the same argument shows that
\[
    Z<\infty.
\]
Define
\begin{equation}\label{eq:def-G0}
    c_0:=Z^{-1/p^*}, \qquad v:=c_0G_0.
\end{equation}
Then
\[
    \|v\|_{L^{p^*}(\Sigma,w)}=1.
\]
Although $v$ is not compactly supported, it may be inserted into~\eqref{eq:fundamental-st} by approximation. Indeed, let
\[
    v_R:=c_R\chi_R G_0,
\]
where $\chi_R\in C^\infty_c(\mathds{R}^N)$ satisfies
\[
    0\leq\chi_R\leq1,
    \qquad
    \chi_R\equiv1\ \text{on }B_R,
    \qquad
    \chi_R\equiv0\ \text{outside }B_{2R},
\]
and $c_R>0$ is chosen so that
\[
    \|v_R\|_{L^{p^*}(\Sigma,w)}=1.
\]
Then $c_R\to c_0$, and dominated convergence allows us to pass to the
limit in~\eqref{eq:fundamental-st} as $R\to\infty$.

A direct computation gives
\[
    G_0^{p_*}
    =
    |x-e|^{-(d-1)p'},
\]
and
\[
    G_0^{p^*}|x-e|^{p'}
    =
    |x-e|^{-(d-1)p'}.
\]
Consequently,
\[
    \|v\|_{L^{p_*}(\Sigma,w)}^{p_*}
    =
    c_0^{p_*}I,
\]
and
\[
    \left(
        \int_{\Sigma}
        v^{p^*}|x-e|^{p'}w\, \mathrm{d} x
    \right)^{1/p'}
    =
    c_0^{p^*/p'}I^{1/p'}.
\]
Substituting these expressions into~\eqref{eq:fundamental-st}, we obtain,
for every $u$ satisfying~\eqref{eq:normalization-st},
\begin{equation}\label{eq:normalized-trace-st}
    \|u\|_{L^{p_*}(\partial\Sigma,w)}^{p_*}
    \leq
    A\|\nabla u\|_{L^p(\Sigma,w)}-B,
\end{equation}
where
\begin{equation}\label{eq:ab-st}
    A
    :=
    \frac{p_*}{c_\Sigma}
    c_0^{p^*/p'}I^{1/p'},
    \qquad
    B
    :=
    \frac{d}{c_\Sigma}
    c_0^{p_*}I.
\end{equation}

We next remove the normalization. If
$\|u\|_{L^{p_*}(\partial\Sigma,w)}=0$, there is nothing to prove.
Otherwise, define
\[
    \mathcal{J}(u)
    :=
    \frac{\|\nabla u\|_{L^p(\Sigma,w)}}
         {\|u\|_{L^{p_*}(\partial\Sigma,w)}}, \qquad \mathcal{K}(u)
    :=
    \frac{\|\nabla u\|_{L^p(\Sigma,w)}}
         {\|u\|_{L^{p^*}(\Sigma,w)}}.
\]
Applying~\eqref{eq:normalized-trace-st} to $u/\|u\|_{L^{p^*}(\Sigma,w)}$ gives
\[
    \left(
        \frac{\mathcal{K}(u)}{\mathcal{J}(u)}
    \right)^{p_*}
    \leq
    A\mathcal{K}(u)-B,
\]
and hence
\begin{equation}\label{J-LOWER-BOUND}
    \mathcal{J}(u)^{p_*}
    \geq
    \frac{\mathcal{K}(u)^{p_*}}{A\mathcal{K}(u)-B}.
\end{equation}
Consider the function
\[
    \Phi(t):=\frac{t^{p_*}}{At-B},
    \qquad
    t>\frac{B}{A}.
\]
Its derivative is
\[
    \Phi'(t)
    =
    \frac{
        t^{p_*-1}
        \bigl((p_*-1)At-p_*B\bigr)
    }
    {(At-B)^2},
\]
and therefore $\Phi$ attains its minimum at
\[
    t_0
    =
    \frac{p_*B}{(p_*-1)A}.
\]
Since
\[
    p_*-1
    =
    \frac{d(p-1)}{d-p},
\]
we obtain from~\eqref{eq:ab-st}
\[
    t_0
    =
    \frac{d-p}{p-1}
    c_0 I^{1/p}.
\]
Moreover,
\[
    At_0-B=\frac{B}{p_*-1}.
\]
It follows that
\begin{align*}
    \min_{t>B/A}\Phi(t)
    &=
    \frac{t_0^{p_*}}{At_0-B}
    =
    c_\Sigma
    \left(
        \frac{d-p}{p-1}
    \right)^{p_*-1}
    I^{\frac{p-1}{d-p}}.
\end{align*}
Therefore~\eqref{J-LOWER-BOUND} yields
\[
    \mathcal{J}(u)^{p_*}
    \geq
    c_\Sigma
    \left(
        \frac{d-p}{p-1}
    \right)^{p_*-1}
    I^{\frac{p-1}{d-p}}.
\]
Equivalently,
\begin{equation}\label{FINAL-TRACE-ST}
    \left(
        \int_{\partial\Sigma}
        |u|^{p_*}w\, \mathrm{d}\sigma
    \right)^{1/p_*}
    \leq
    C_{\mathrm{MT}}
    \left(
        \int_{\Sigma}
        |\nabla u|^p w\, \mathrm{d} x
    \right)^{1/p},
\end{equation}
where
\begin{equation}\label{eq:cmt-correct-st}
    C_{\mathrm{MT}}
    =
    \left[
        c_\Sigma
        \left(
            \frac{d-p}{p-1}
        \right)^{p_*-1}
        I^{\frac{p-1}{d-p}}
    \right]^{-1/p_*}.
\end{equation}
For completeness, observe that
\[
    \nabla G_0(x)
    =
    -\frac{d-p}{p-1}
    |x-e|^{-\frac{d-p}{p-1}-2}(x-e),
\]
so that
\[
    \int_{\Sigma}
    |\nabla G_0|^p w\, \mathrm{d} x
    =
    \left(
        \frac{d-p}{p-1}
    \right)^p I.
\]
Equivalently,
\begin{equation}\label{eq:i-grad-g}
    I
    =
    \left(
        \frac{p-1}{d-p}
    \right)^p
    \|\nabla G_0\|_{L^p(\Sigma,w)}^p.
\end{equation}
It remains to pass to the homogeneous Sobolev space. Let
$u\in\dot W^{1,p}(\Sigma,w)$. By definition, there exists a sequence
$\{u_j\}\subset C^\infty_c(\overline\Sigma)$ such that
\[
    \nabla u_j
    \longrightarrow
    \nabla u
    \qquad
    \text{in }L^p(\Sigma,w).
\]
Applying \eqref{FINAL-TRACE-ST} to $u_j-u_k$, we find
\[
    \|u_j-u_k\|_{L^{p_*}(\partial\Sigma,w)}
    \leq
    C_{MT}
    \|\nabla u_j-\nabla u_k\|_{L^p(\Sigma,w)}.
\]
Thus $\{u_j|_{\partial\Sigma}\}$ is a Cauchy sequence in
$L^{p_*}(\partial\Sigma,w \dd \sigma)$. Its limit defines
$\operatorname{Tr}u$, independently of the approximating sequence, and
\eqref{FINAL-TRACE-ST} passes to the limit. Hence
\[
    \operatorname{Tr}:
    \dot W^{1,p}(\Sigma,w)
    \longrightarrow
    L^{p_*}(\partial\Sigma,w \dd \sigma)
\]
is a well-defined continuous linear operator satisfying
\eqref{eq:trace-continuous}. This completes the proof.

\subsection{Proof of Theorem~\ref{thm:sharp-trace}}

The integrability argument used in the
proof of Theorem~\ref{thm:trace-continuous} applies with
$\delta=\operatorname{dist}(e,\Sigma)>0$. On bounded subsets,
$|x-e|\geq\delta$; for $|x|\geq2|e|$, one has
$|x-e|\geq|x|/2$. Homogeneity and $(d-1)p'>d$ therefore give
$0<I_{\Sigma,w,e}<\infty$.

Using Proposition~\ref{prop:algebra-transport-map} with $c=c_{\Sigma,e}$, and repeating the optimization in the proof of Theorem~\ref{thm:trace-continuous}, we obtain the trace estimate~\eqref{eq:sharp-trace} in the transport-compatible regime.

We have to confirm that $C_\sharp$ is indeed sharp and attained.
Using the same notation, set
\[
\kappa:=\frac{d-p}{p-1}, \qquad G_0(x):=|x-e|^{-\kappa}.
\]
Direct differentiation gives
\begin{align*}
 \nabla G_0&=-\kappa|x-e|^{-\kappa-2}(x-e),\\
 |\nabla G_0|^{p-2}\nabla G_0
 &=-\kappa^{p-1}|x-e|^{-d}(x-e),\\
 \diver(w|\nabla G_0|^{p-2}\nabla G_0)
 &=-\kappa^{p-1}|x-e|^{-d}\frac{\partial w}{\partial x_N}=0.
\end{align*}
The last computation holds weakly, since $w$ is locally Lipschitz.
At $w\,\dd \sigma$-almost every regular boundary point,
\[
x\cdot\eta(x)=0,\qquad e \cdot \eta(x)=c_{\Sigma,e},
\]
and $\kappa(p_*-1)=d$, so the weighted flux identity is
\[
 w|\nabla G_0|^{p-2}\nabla G_0\cdot\eta
 =c_{\Sigma,e}\kappa^{p-1}wG_0^{p_*-1}.
\]
Choose a radial, nonincreasing function
$\chi\in C_c^\infty(\R^N)$ as in the proof of Proposition~\ref{prop:bounded-general}.

For small $\varepsilon>0$, consider 
\[
\Sigma_{\varepsilon,2R}:=\Sigma \cap (B_{2R} \setminus B_{\varepsilon}), \qquad E_{R} := \Sigma \cap (B_{2R} \setminus B_R).
\]
Let 
\[
\mathcal{V} = w|\nabla G_0|^{p-2}\nabla G_0 = -\kappa^{p-1}w(x)|x-e|^{-d}(x-e).
\]
Then,

\[
0 = \int_{\Sigma_{\varepsilon,2R}} \chi_R G_0 \diver \mathcal{V}\,\dd x = \int_{\Sigma_{\varepsilon,2R}} \diver (\chi_R G_0  \mathcal{V})\,\dd x -\int_{\Sigma_{\varepsilon,2R}} \nabla (\chi_R G_0) \cdot \mathcal{V}\,\dd x.
\]
Expanding $\nabla (\chi_R G_0)$ and using the divergence theorem,
\[
\int_{\partial \Sigma_{\varepsilon,2R}} \chi_R G_0  \mathcal{V} \cdot \eta\,\dd \sigma = \int_{\Sigma_{\varepsilon,2R}} \chi_R \nabla G_0  \cdot \mathcal{V}\,\dd x+\int_{\Sigma_{\varepsilon,2R}} G_0 \nabla \chi_R  \cdot \mathcal{V}\,\dd x.
\]
Note that $\nabla G_0 \cdot \mathcal{V} = w|\nabla G_0|^p$. Thus
\begin{equation}\label{eq:int-cut-step}
 \int_{\partial \Sigma_{\varepsilon,2R}} \chi_R G_0  \mathcal{V} \cdot \eta\,\dd \sigma = \int_{\Sigma_{\varepsilon,2R}} \chi_R |\nabla G_0|^p \, w\dd x+\int_{E_R}  G_0 |\nabla G_0|^{p-2} \nabla G_0  \cdot \nabla \chi_R\, w\dd x.  
\end{equation}
Because 
\[ 
\delta=\operatorname{dist}(e,\overline\Sigma)>0,
\]
the singularity of $G_0$ and $\nabla G_0$ are separated from $\overline \Sigma$. 

Since
\[
\partial \Sigma_{\varepsilon,2R} = (\partial \Sigma \cap (B_{2R} \setminus B_{\varepsilon})) \cup (\partial B_{2R} \cap \Sigma) \cup (\partial B_\varepsilon \cap \Sigma),
\]
the boundary integral of the left-hand side of equation~\eqref{eq:int-cut-step} can be decomposed into the sum of three integrals
\[
\int_{\partial \Sigma_{\varepsilon,2R}} \chi_R G_0  \mathcal{V} \cdot \eta\,\dd \sigma = H_{\Sigma} + H_{2R} + H_{\varepsilon},
\]
where
\begin{flalign*}
H_\Sigma &:= \int_{\partial \Sigma \cap (B_{2R} \setminus B_{\varepsilon})} \chi_R G_0  \mathcal{V} \cdot \eta\,\dd \sigma, &&\\
H_{2R} &:= \int_{\partial B_{2R} \cap \Sigma} \chi_R G_0  \mathcal{V} \cdot \left(\frac{x}{2R}\right)\,\dd \sigma, &&\\
H_\varepsilon &:= \int_{\partial B_\varepsilon \cap \Sigma} \chi_R G_0  \mathcal{V} \cdot \left(-\frac{x}{\varepsilon}\right)\,\dd \sigma.
\end{flalign*}
More precisely, this decomposition holds up to a set of $\mathcal{H}^{N-1}$-measure zero, since the intersections between the conical and spherical boundary pieces are $(N-2)$-dimensional. On the conical boundary, the condition $x\cdot\eta=0$ and transport compatibility give
\[
 \mathcal{V}\cdot\eta
 =c_{\Sigma,e}\kappa^{p-1}w(x)|x-e|^{-d},\qquad
 G_0\mathcal{V}\cdot\eta=c_{\Sigma,e}\kappa^{p-1}wG_0^{p_*}.
\]
Then,
\[
H_\Sigma = c_{\Sigma,e}\kappa^{p-1}\int_{\partial \Sigma \cap(B_{2R} \setminus B_{\varepsilon})} \chi_R G^{p_*}_0  w \dd \sigma
\]
On the boundary $\partial B_{2R} \cap\Sigma$ we have $\chi_{R} =0$ and, therefore, $H_{2R} = 0$. 
On the boundary $\partial B_{\varepsilon} \cap \Sigma$, since
$\operatorname{dist}(e, \overline\Sigma)>0$, both $G_0$ and $\mathcal{V}/w$ are
bounded near the vertex. 
Moreover, homogeneity and continuity of $w$ on the spherical section imply 
\[
    w(x)\leq C|x|^a,
\]
for every $x \in \Sigma$. Therefore
\[
 |H_\varepsilon|
 \leq C\int_{\partial B_\varepsilon \cap\Sigma}w\,\dd \sigma
 \leq C\varepsilon^{N-1+a}
 =C\varepsilon^{d-1}\longrightarrow0.
\]

Equation~\eqref{eq:int-cut-step} becomes
\[
c_{\Sigma,e}\kappa^{p-1}\int_{\partial \Sigma \cap B_{2R}} \chi_R G^{p_*}_0  w \dd \sigma=\int_{\Sigma_{2R}} \chi_R |\nabla G_0|^p \, w\dd x+\int_{E_R}  G_0 |\nabla G_0|^{p-2} \nabla G_0  \cdot \nabla \chi_R\, w\dd x.
\]
Note that, for $|x| \ge R > 2|e|$,
\[
|x-e| \ge \frac{|x|}2 \implies |x-e|^{-1} \le 2 R^{-1}.
\]
Therefore,
\[
\left| \int_{E_R}  G_0 |\nabla G_0|^{p-2} \nabla G_0  \cdot \nabla \chi_R\, w\dd x \right| \le C R^{-\kappa-d+1}R^{-1}R^d = C R^{-\kappa} \to 0.
\]
as $R \to \infty$. 
It remains to verify the convergence in the energy norm. Since
\[
\nabla(\chi_R G_0)-\nabla G_0
=
\nabla\chi_R\,G_0+(\chi_R-1)\nabla G_0,
\]
we have
\[
\begin{aligned}
\int_\Sigma
\left|\nabla(\chi_R G_0)-\nabla G_0\right|^p w\,\mathrm{d}x
&\leq
C\int_{E_R}|\nabla\chi_R|^p|G_0|^p w\,\mathrm{d}x
\\
&\quad+
C\int_{\Sigma\setminus B_R}
|\nabla G_0|^p w\,\mathrm{d}x.
\end{aligned}
\]
The first term is bounded by $CR^{-\kappa}$, while the second
tends to zero since
\[
\int_\Sigma|\nabla G_0|^p w\,\mathrm{d}x<\infty.
\]
Hence
\[
\left\|\nabla(\chi_R G_0)-\nabla G_0
\right\|_{L^p(\Sigma,w)}
\longrightarrow0
\qquad\text{as }R\to\infty.
\]

For each fixed $R$, the function $\chi_R G_0$ is smooth on $\Sigma$
and has compact support in $B_{2R}$. Moreover, the pole of $G_0$ lies at a positive distance from $\overline{\Sigma}$.
Thus $\chi_R G_0$ can be modified in a neighborhood of the pole,
disjoint from $\overline{\Sigma}$, to obtain a smooth compactly
supported function on $\R^N$ without changing its restriction to
$\Sigma$. Consequently, $\chi_R G_0$ is admissible and the convergence above proves that
$G_0\in\dot W^{1,p}(\Sigma,w)$.

Monotone convergence yields
\[
c_{\Sigma,e}\kappa^{p-1}\int_{\partial \Sigma} G^{p_*}_0  w \dd \sigma=\int_{\Sigma}  |\nabla G_0|^p \, w\dd x.
\]
Set
\[
 E_G=\int_\Sigma|\nabla G_0|^pw\dd x,\qquad
 B_G=\int_{\partial\Sigma}G_0^{p_*}w\dd\sigma.
\]
Then
\begin{equation}\label{eq:profileidentities}
 E_G=\kappa^pI_{\Sigma,w,e},\qquad
 E_G=c_{\Sigma,e}\kappa^{p-1}B_G,\qquad
 B_G=\frac{\kappa I_{\Sigma,w,e}}{c_{\Sigma,e}}.
\end{equation}
Thus $G_0\in\dot W^{1,p}(\Sigma,w)$ and its abstract trace is its
boundary restriction, as follows also from boundary dominated
convergence. From~\eqref{eq:profileidentities},
\[
 \frac{B_G^{1/p_*}}{E_G^{1/p}}
 =\bigl[c_{\Sigma,e}\kappa^{p_*-1}
 I_{\Sigma,w,e}^{\frac{p-1}{d-p}}\bigr]^{-1/p_*}.
\]
This proves attainment and sharpness. The sharp constant is then obtained by
\[
C_\sharp
=
\left[
c_{\Sigma,e}
\left(\frac{d-p}{p-1}\right)^{p_*-1}
I_{\Sigma,w,e}^{\frac{p-1}{d-p}}
\right]^{-1/p_*}.
\]
It remains to verify equality for the family~\eqref{eq:explicit-extremals}.
For $\lambda>0$, define
\[
u_{C,\lambda}(x)
=
C\lambda^{\frac{d-p}{p}}
|\lambda x-e|^{-\frac{d-p}{p-1}},
\qquad C\neq0.
\]
Since
\[
G_0(\lambda x)
=
|\lambda x-e|^{-\frac{d-p}{p-1}},
\]
we may write
\[
u_{C,\lambda}(x)
=
C\lambda^{\frac{d-p}{p}}G_0(\lambda x).
\]
The homogeneity of $w$ gives
\[
w(\lambda x)=\lambda^a w(x),
\]
and hence, after the change of variables $y=\lambda x$,
\[
\int_{\Sigma}
|\nabla u_{C,\lambda}|^p w\,\mathrm{d}x
=
|C|^p
\int_{\Sigma}
|\nabla G_0|^p w\,\mathrm{d}x.
\]
Similarly, using
\[
p_*\frac{d-p}{p}=d-1
\]
and the homogeneity of the boundary measure,
\[
w(x)\,\mathrm{d}\sigma(x)
=
\lambda^{-(N-1+a)}
w(y)\,\mathrm{d}\sigma(y)
=
\lambda^{-(d-1)}
w(y)\,\mathrm{d}\sigma(y),
\]
we obtain
\[
\int_{\partial\Sigma}
|u_{C,\lambda}|^{p_*}w\,\mathrm{d}\sigma
=
|C|^{p_*}
\int_{\partial\Sigma}
G_0^{p_*}w\,\mathrm{d}\sigma.
\]
Therefore inequality~\eqref{eq:sharp-trace} is invariant under multiplication by
$C$ and under the dilation parameter $\lambda$. Since equality holds for the normalized profile $c_0G_0$, it follows that equality holds for every
\[
u_{C,\lambda}(x)
=
C\lambda^{\frac{d-p}{p}}
|\lambda x-e|^{-\frac{d-p}{p-1}},
\qquad C\neq0,\quad \lambda>0.
\]

Hence $C_\sharp$ is optimal and is attained by every function of the
form~\eqref{eq:explicit-extremals}.

\subsection{Proof of Theorem~\ref{thm:classification}}

Let $u \in \dot W^{1,p}(\Sigma,w)$ be a nonzero extremal. Its trace norm is positive; otherwise
equality would force zero energy and hence $u\equiv0$. The function $|u|$
has the same energy and trace norm, so initially consider a
nonnegative extremal. Variations $u+t\zeta$ with
$\zeta\in C_c^\infty(\Sigma)$ preserve its boundary trace. Extremality
therefore implies
\begin{equation}\label{eq:euler-lagrange-extrema}
 \int_\Sigma |\nabla u|^{p-2}\nabla u\cdot\nabla\zeta\,w\dd x=0.
\end{equation}
The differentiation is justified by $p>1$ and the usual $p$-growth bound as in~\eqref{eq:p-growth-bound}.
From~\eqref{eq:local-positive-lower-bound} in Section~\ref{sec:preliminaries} we conclude
\[
 u>0\text{ in }\Sigma,\qquad \inf_Ku>0,\qquad \text{ for every compact }K\subset\Sigma
\]
At this point, normalize $u$ by setting $\|u\|_{L^{p^*}(\Sigma,w)} = 1$. Consider $v = c_0G_0$ as in definition~\eqref{eq:def-G0} and transport the probability measures $\dd \mu=u^{p^*}w\dd x$ onto $\dd \nu=v^{p^*}w\dd x$.  Let $\varphi$ be the convex transport potential defined
as in Lemma~\ref{lem:trucation-potential} and Theorem~\ref{thm:general}.

To retain the contribution of the singular part of the
distributional Hessian in the equality analysis, we use
a local approximation argument related to that of
Balogh, Don, and Krist\'aly~\cite[Section~3.2]{BDK2024}.
In the present trace setting, the approximating sequence
must also preserve convergence of the boundary traces.
We therefore modify the approximants only on compact
subsets of $\Sigma$, obtaining local uniform convergence
while leaving their boundary traces unchanged.

Let
$\mu_s := \operatorname{tr}[D_s^2\varphi] \geq 0$.
Fix a compact set $K\subset\Sigma$. Since $\Sigma$ is open, $ \operatorname{dist}(K,\partial\Sigma)>0$.
Choose $\zeta\in C_c^\infty(\Sigma)$ such that
\[
    0\leq\zeta\leq1,
    \qquad
    \zeta\equiv1
    \quad\text{in a neighborhood of }K.
\]
Consider the chain of inclusions:
\[
K \subset \subset \{\zeta=1\}\subset \subset \supp \zeta \subset \subset V \subset \subset \Sigma.
\]
Let $(u_j)$ be the approximating sequence as in equation~\eqref{eq:approximation}.
Consider the standard mollifier
\[
\rho_{\varepsilon}(x) = \varepsilon^{-N}\rho\left(\frac{x}{\varepsilon}\right), \qquad \rho \in C^{\infty}_c(B_1), \qquad \rho \ge 0, \qquad \int_{\R^N} \rho \dd x =1.
\]
Consider the decreasing sequence $(\varepsilon_j)_{j \in \mathds{N}}$  converging to $0$, such that
\[
0<\varepsilon_j<\frac12\dist(\supp \zeta,\R^N \setminus V),\qquad \forall j \ge 1.
\]
Choose $\theta \in C^{\infty}_c(\Sigma)$ with $\theta \equiv 1$ on a neighborhood of $\overline V$ and extend $\theta u$ by zero to $\R^N$.
Define
\[
 \widetilde u_j
 =\zeta((\theta u)*\rho_{\varepsilon_j})+(1-\zeta)u_j.
\]
The mollification is performed in a larger interior neighborhood of $K$.
There the weight is bounded above and below, so convolution converges
in the local Sobolev norm and, by continuity of $u$, uniformly on $K$.
The terms involving $\nabla\zeta$ converge by local $L^p$ convergence.
Consequently $\widetilde u_j$ still converges globally in the energy
and interior norms, agrees with $u_j$ near the boundary, and converges
uniformly on $K$. A diagonal construction works on a compact exhaustion.
These functions are nonnegative and admit compactly supported $C^1$ extensions to $\R^N$.

Fix $k$ and define smooth cut-off functions $\chi_R$ as in Proposition~\ref{prop:bounded-general}. Take $R$ with $\chi_R\equiv1$ on $K$. Using equation~\eqref{eq:basic}, applied
to $\varphi_k$ and $\chi_R\widetilde u_j^{p_*}$, 
we obtain
\begin{align*}
 &\int_\Sigma
 \chi_R\widetilde u_j^{p_*}
 \bigl(w\Delta_A\varphi_k+\nabla w\cdot\nabla\varphi_k\bigr)\dd x
 +\int_\Sigma
 \chi_R\widetilde u_j^{p_*}
 \, w\dd \mu_{s,k}
 +
 c_{\Sigma,e}
 \int_{\partial\Sigma}\chi_R(\Tr \widetilde u_j)^{p_*}w\dd\sigma
 \\
 &\quad
 \leq
 -p_*\int_\Sigma
 \chi_R\widetilde u_j^{p_*-1}\nabla \widetilde u_j\cdot(\nabla\varphi_k-e)w\dd x- \int_\Sigma
 \widetilde u_j^{p_*}\nabla \chi_R\cdot(\nabla\varphi_k-e)w\dd x
\end{align*}
where $\mu_{s,k}:=\tr[D^2_s \varphi_{k}]$. The following integral inequality
\[
\int_\Sigma
 \chi_R\widetilde u_j^{p_*}
 \, w\dd \mu_{s,k} \ge \int_K
 \widetilde u_j^{p_*}
 \, w\dd \mu_{s,k}
\]
holds since the complement
\[
\int_{\Sigma \setminus K}
 \chi_R\widetilde u_j^{p_*}
 \, w\dd \mu_{s,k} \ge 0.
\]
Therefore
\begin{align*}
 &\int_\Sigma
 \chi_R\widetilde u_j^{p_*}
 \bigl(w\Delta_A\varphi_k+\nabla w\cdot\nabla\varphi_k\bigr)\dd x
 +\int_K
 \widetilde u_j^{p_*}
 \, w\dd \mu_{s,k}
 +
 c_{\Sigma,e}
 \int_{\partial\Sigma}\chi_R(\Tr \widetilde u_j)^{p_*}w\dd\sigma
 \\
 &\quad
 \leq
 -p_*\int_\Sigma
 \chi_R\widetilde u_j^{p_*-1}\nabla \widetilde u_j\cdot(\nabla\varphi_k-e)w\dd x- \int_\Sigma
 \widetilde u_j^{p_*}\nabla \chi_R\cdot(\nabla\varphi_k-e)w\dd x
\end{align*}

To pass to the limits $j\to\infty$ and $R\to\infty$, we proceed
as in the proof of Proposition~\ref{prop:bounded-general}.
Fix $k$ and a compact set $K\subset\Sigma$.
Uniform convergence on $K$ and the finiteness of $\mu_{s,k}(K)$ imply
\[
 \int_K \widetilde u_j^{p_*} w\,\dd\mu_{s,k}
 \longrightarrow\int_K u^{p_*} w\,\dd\mu_{s,k}.
\]

Before passing $k \to \infty$, in order to apply the assertion in~\eqref{local-equality}, we must verify that
\[
\Sigma \subset \mathrm{int}(  \mathrm{dom}\, \varphi).
\]
Set
$D=\operatorname{dom}\varphi$. Positivity of the source density gives
\[
 \mathcal L^N(\Sigma\setminus D)=0.
\]
Consequently $\Sigma\subset\overline D$: otherwise a small ball in
$\Sigma$ disjoint from $\overline D$ would have positive measure.
Because $\Sigma$ is open,
\[
 \Sigma\subset\operatorname{int}(\overline D).
\]
The convex set $D$ is full-dimensional, and the elementary convex-set
identity
\[
 \operatorname{int}(\overline D)=\operatorname{int}D
\]
gives the desired conclusion.

To pass to the limit $k \to \infty$, we proceed as in the proof of Theorem~\ref{thm:general}. By~\eqref{local-equality}, the singular Hessian measures agree on $K$, for $k$ sufficiently large, so that $\mu_{s,k} |_K=\mu_s |_K$.
Proceeding with exhaustion of $\Sigma$ by compact
sets and using monotone convergence,
\begin{align}\label{singularineq}
 &\int_\Sigma u^{p_*}(w\Delta_A\varphi+\nabla w\cdot \nabla \varphi)\dd x
 +\int_\Sigma u^{p_*}w\dd\mu_s
 +c_{\Sigma,e}\int_{\partial\Sigma}(\Tr u)^{p_*}w\dd\sigma\notag\\
 &\hspace{10mm}\leq
 -p_*\int_\Sigma u^{p_*-1}\nabla u\cdot(\nabla \varphi-e)w\dd x.
\end{align}

We recall the chain of inequalities using the same procedure by transporting $\mu$ onto $\nu$ as in Proposition~\ref{prop:algebra-transport-map} and using the notation and formulas obtained in the proofs of Theorem~\ref{thm:trace-continuous} and Theorem~\ref{thm:sharp-trace},
\[
dc^{p_*}_0I=d\int_{\Sigma} v^{p_*}\, w\,\dd x = d\int_{\Sigma} u^{p_*} \left ( \frac {w(\nabla \varphi)}{w} \right )^{1/d} (\det D^2_A \varphi)^{1/d}\, w\,\dd x.
\]

The following estimate combines the weight inequality~\eqref{eq:weight-conc-st}, the matrix arithmetic–geometric mean inequality for $\det D^2_A \varphi$, and the scalar weighted arithmetic–geometric mean inequality with weights $a/d$ and $N/d$,
\[
d\int_{\Sigma} u^{p_*} \left ( \frac {w(\nabla \varphi)}{w} \right )^{1/d} (\det D^2_A \varphi)^{1/d}\, w\,\dd x \le \int_\Sigma u^{p_*}(w\Delta_A\varphi+\nabla w\cdot \nabla \varphi)\dd x.
\]
The next inequality adds the nonnegative singular term $\int_{\Sigma}u^{p_*}w\dd \mu_s$,
\[
 \int_\Sigma u^{p_*}(w\Delta_A\varphi+\nabla w\cdot \nabla \varphi)\dd x \le \int_\Sigma u^{p_*}(w\Delta_A\varphi+\nabla w\cdot \nabla \varphi)\dd x
 +\int_\Sigma u^{p_*}w\dd\mu_s
\]
At the third step, we use inequality~\eqref{singularineq}:
\begin{align*}
    \int_\Sigma u^{p_*}(w\Delta_A\varphi+\nabla w\cdot \nabla \varphi)\dd x
 +\int_\Sigma u^{p_*}w\dd\mu_s
    \le &\\
    -p_*\int_\Sigma u^{p_*-1}\nabla u\cdot(\nabla \varphi-e)w\dd x - c_{\Sigma,e}\int_{\partial\Sigma}(\Tr u)^{p_*}w\dd\sigma.
\end{align*}
The last step uses H\"older's inequality and the push-forward identity to obtain
\begin{align*}
 \left |-p_*\int_\Sigma u^{p_*-1}\nabla u\cdot(\nabla\varphi-e)w\,\dd x \right |
 &\leq p_*\mathcal{K}(u)
 \left(\int_\Sigma u^{p^*}|\nabla\varphi-e|^{p'}w\,\dd x\right)^{1/p'}\\
 &=p_*\mathcal{K}(u)
 \left(\int_\Sigma v^{p^*}|y-e|^{p'}w(y)\,\dd y\right)^{1/p'}\\
 &=p_*\mathcal{K}(u)(c_0^{p^*} I)^{1/p'}.
\end{align*}

where $\mathcal{K}(u)=\|\nabla u\|_{L^{p}(\Sigma,w)}/\|u\|_{L^{p^*}(\Sigma,w)} = \|\nabla u\|_{L^{p}(\Sigma,w)}$, for the normalized $u$. Since $u$ is extremal
\[
\int_{\partial\Sigma}(\Tr u)^{p_*}w\dd\sigma = C^{{p_*}}_{\sharp} \mathcal{K}(u)^{p_{*}}= c^{-1}_{\Sigma,e}\kappa^{1-p_*}I^{\frac{1-p}{d-p}}\mathcal{K}(u)^{p_{*}},
\]
where $I=I_{\Sigma,w,e}$. We reach the following inequality
\begin{equation}\label{eq:check-final}
dc^{p_*}_0I + \frac{\mathcal{K}(u)^{p_{*}}}{\kappa^{p_*-1}I^{\frac{1}{\kappa}}}  \le p_*\mathcal{K}(u)c_0^{p^*/p'}I^{1/p'}.
\end{equation}
To clarify the chain of integral inequalities define:

\begin{flalign*}
I_1 &:= d\int_{\Sigma} v^{p_*}\, w\,\dd x, &&\\
I_2 &:= d\int_{\Sigma} u^{p_*} \left ( \frac {w(\nabla \varphi)}{w} \right )^{1/d} (\det D^2_A \varphi)^{1/d}\,w\,\dd x, &&\\
I_3 &:= \int_\Sigma u^{p_*}(w\Delta_A\varphi+\nabla w\cdot \nabla \varphi)\dd x, &&\\
I_4 &:= \int_\Sigma u^{p_*}(w\Delta_A\varphi+\nabla w\cdot \nabla \varphi)\dd x
    +\int_\Sigma u^{p_*}w\dd\mu_s, &&\\
I_5 &:= p_*\int_\Sigma u^{p_*-1}\nabla u\cdot(\nabla \varphi-e)w\dd x, &&\\
I_6 &:= c_{\Sigma,e}\int_{\partial\Sigma}(\Tr u)^{p_*}w\dd\sigma. &&
\end{flalign*}
We have established
\[
I_1 = I_2 \le I_3 \le I_4 \le -I_5 -I_6,
\]
or equivalently
\[
I_1 +I_6= I_2 +I_6\le I_3 +I_6\le I_4+I_6 \le -I_5.
\]
Then equation~\eqref{eq:check-final} is, indeed,
\[
dc^{p_*}_0I + \frac{\mathcal{K}(u)^{p_{*}}}{\kappa^{p_*-1}I^{\frac{1}{\kappa}}}=I_1+I_6 \le -I_5 \le |I_5| \le  p_*\mathcal{K}(u)(c_0^{p^*}I_{})^{1/p'},
\]
where the last inequality was obtained by  H\"older's inequality and the push-forward identity. Now, for $c=c_{\Sigma,e}$, consider
\[
    A_c
    :=
    \frac{p_*}{c}
    c_0^{p^*/p'}I^{1/p'},
    \qquad
    B_c
    :=
    \frac{d}{c}
    c_0^{p_*}I,\qquad \Phi_c(t):=\frac{t^{p_*}}{A_ct-B_c},
    \qquad
    t>\frac{B_c}{A_c}.
\]
We have from the proof of Theorem~\ref{thm:trace-continuous},
\[
\mathcal{J}(u)^{p_*} \ge \Phi_c(\mathcal{K}(u))\ge \min \Phi_c.
\]
For the normalized extremal $u$ we must achieve the minimum, 
\[
\mathcal{J}(u)^{p_*} = C^{-p_*}_{\sharp}=\min_{t>B_c/A_c} \Phi_{c}(t).
\]
Therefore
\[
\mathcal{K}(u) = t_0 =
    \kappa
    c_0 I^{1/p}.
\]
If we use the value $\mathcal{K}(u)=\kappa c_0 I^{1/p}$ in equation~\eqref{eq:check-final} we have the equality
\[
I_1+I_6 = p_*\mathcal{K}(u)(c_0^{p^*}I_{})^{1/p'}.
\]
Since the endpoints of the chain coincide, equality holds at every
intermediate step. For instance,
\[
I_4 - I_3 = 0 \implies \int_\Sigma u^{p_*}w\dd\mu_s=0.
\]
Then for every compact $K\subset\Sigma$,
\[
 0=\int_\Sigma u^{p_*}w\dd\mu_s
 \geq(\inf_Ku)^{p_*}(\min_Kw)\mu_{s}(K),
\]
so $\mu_{s}=0$. A positive semidefinite matrix measure with zero trace is
zero; therefore $D_s^2\varphi=0$ in $\Sigma$ and $D^2 \varphi = D^2_A \varphi \dd x$.
Additionally,
\[
I_3 - I_2 =0 \implies \tr D_A^2 \varphi -N(\det D_A^2 \varphi )^{1/N}=0.
\]
We conclude that all eigenvalues of $D^2 \varphi$ are equal. The Jacobian identity and positivity
of source and target densities give $\det D^2\varphi>0$ almost everywhere. Thus
\begin{equation}\label{scalarhessian}
 D^2\varphi=\lambda(x)I_N\dd x,\qquad
 \lambda(x)>0\text{ a.e.}
\end{equation}
For each $j$, choose
$i\ne j$. Equation~\eqref{scalarhessian} gives
\[
 \partial_j\lambda=\partial_j\partial_{ii}\varphi
 =\partial_i\partial_{ij}\varphi=0.
\]
Connectedness implies that $\lambda$ is constant, and hence
\begin{equation}\label{affine}
 \nabla \varphi(x)=\lambda x+b,\qquad \lambda>0.
\end{equation}
Let
\[
A(x):=\lambda x+b,\qquad \lambda>0.
\]
Since the source and target measures have support $\overline \Sigma$, and $A$ is a homeomorphism pushing one
measure onto the other,
\[
A(\overline \Sigma) = \overline \Sigma.
\]

Consequently,
\[
 \lambda \overline \Sigma + b=\overline \Sigma.
\]
Because $\lambda \overline \Sigma = \overline \Sigma$, one gets
\[
 \overline \Sigma+b=\overline \Sigma.
\]
Then $0\in \overline \Sigma$ gives $b\in \overline \Sigma$
and $-b\in \overline \Sigma$. Hence $b\in \operatorname{L}(\Sigma)$, and
\[
 x_0=-\frac b\lambda\in \operatorname{L}(\Sigma).
\]
The translations in $\operatorname{L}(\Sigma)$ preserve $\Sigma$. The lineality invariance established
before Theorem~\ref{thm:classification} gives 
\[
 w(x+v)=w(x)
 \qquad\text{for }v\in \operatorname{L}(\Sigma).
\]
Therefore,
\[
 w\bigl(\lambda(x-x_0)\bigr)=\lambda^a w(x).
\]
The affine change of the variables $x \mapsto \lambda(x-x_0)$, with $x_0 = -b/\lambda$, gives
\[
 u(x)^{p^*}w(x)
 =\lambda^N v(\lambda(x-x_0))^{p^*}w(\lambda(x-x_0))
 =\lambda^d v(\lambda(x-x_0))^{p^*}w(x).
\]
After canceling $w(x)>0$ and using $d/p^*=(d-p)/p$,
\[
 u(x)=c_0\lambda^{(d-p)/p}
       |\lambda(x-x_0)-e|^{-(d-p)/(p-1)}.
\]
Removing amplitude normalization gives the positive family of extremals.

For an original signed extremal, apply the preceding argument to $|u|$.
Since $|u|$ is bounded away from zero on each compact subset of $\Sigma$,
the sign function admits a Lipschitz extension on the local range of $u$.
The Sobolev chain rule then gives
\[
\nabla\!\left(\frac{u}{|u|}\right)=0
\qquad\text{locally in }\Sigma.
\]
Since $\Sigma$ is connected, $u/|u|$ is constant, and hence $u$ has
a constant sign.

Connectedness makes it constant. Thus every
nonzero extremal has the form
\begin{equation}\label{allprofiles}
 u(x)=C\lambda^{(d-p)/p}
       |\lambda(x-x_0)-e|^{-(d-p)/(p-1)},\qquad
 C\ne0,\quad\lambda>0,\quad x_0\in \operatorname{L}(\Sigma).
\end{equation}
Conversely, translations along $\operatorname{L}(\Sigma)$ preserve
both measures. Dilation gives the energy factor
$\lambda^{d-N-a}$ and the boundary factor
$\lambda^{d-1-(N-1)-a}$, both equal $1$. Amplitudes multiply both norms by $|C|$.
The attainment proved in Theorem~\ref{thm:sharp-trace} therefore proves
that every function in \eqref{allprofiles} is an extremal.

\section*{Statements and Declarations}

\noindent\textbf{Funding.}
This work was financed in part by the Coordena\c{c}\~ao de
Aperfei\c{c}oamento de Pessoal de N\'ivel Superior -- Brasil (CAPES) --
Finance Code 001, and by the MATH-AmSud grant 88887.878894/2023-00.
J. M. do \'{O} also acknowledges partial support from CNPq through grants
312435/2026-6, 409764/2023-0, and 443594/2023-6, and from the Para\'iba
State Research Foundation (FAPESQ), grant no.~3034/2021.

\medskip

\noindent\textbf{Conflict of interest.}
The authors declare that they have no conflicts of interest.

\medskip

\noindent\textbf{Authors' contributions.}
Both authors contributed to the conception of the work, the development
of the mathematical arguments, and the writing and revision of the
manuscript. Both authors read and approved the final version of the
manuscript.

\medskip

\noindent\textbf{Data availability.}
Data sharing is not applicable to this article, as no datasets were
generated or analyzed during the study.

\bigskip


\bibliographystyle{amsplain}
\bibliography{biblio}

\end{document}